\documentclass{amsart}
\usepackage[nobysame]{amsrefs}
\usepackage{amssymb}
\usepackage[usesnames,svgnames]{xcolor}
\usepackage[sc]{mathpazo}
  \linespread{1.05}         
\usepackage{tikz,pgf}
  \pgfdeclarelayer{background}
  \pgfsetlayers{background,main}
\usepackage[colorlinks=true,
	urlcolor=MidnightBlue,
	citecolor=DarkGreen]{hyperref}
\usepackage{enumerate}
\usepackage{subfigure}
\newtheorem{theorem}{Theorem}[section]
\newtheorem{proposition}[theorem]{Proposition}

\newtheorem{lemma}[theorem]{Lemma}

\theoremstyle{remark}

\newtheorem{example}[theorem]{Example}
\newcommand{\alert}[1]{{\color{DarkGreen}\emph{#1}}}
\newcommand{\ie}{\text{i.e.}\;}
\newcommand{\LL}{\mathcal{L}}
\newcommand{\WW}{\mathcal{W}}
\newcommand{\camb}{\mathcal{C}_{\gamma}}

\newcommand{\cc}{\mathbf{c}}
\author{Henri M\"uhle}
\address{LIAFA, Universit{\'e} Paris Diderot, Case 7014, F-75205 Paris Cedex 13, France}
\email{henri.muehle@liafa.univ-paris-diderot.fr}
\keywords{Cambrian semilattice, Tamari lattice, Coxeter group, sortable elements, trimness}
\subjclass[2010]{05E15 (Primary), and 06D75, 20F55 (Secondary)}
\thanks{This work was funded by the FWF Research Grant No. Z130-N13, and by a Public Grant overseen by the French National Research Agency (ANR) as part of the ``Investissements d'Avenir'' Program (Reference: ANR-10-LABX-0098).}
\title{Trimness of Closed Intervals in Cambrian Semilattices}
\begin{document}

\begin{abstract}
	In this article, we give a short algebraic proof that all closed intervals in a $\gamma$-Cambrian semilattice $\mathcal{C}_{\gamma}$ are trim for any Coxeter group $W$ and any Coxeter element $\gamma\in W$.  This means that if such an interval has length $k$, then there exists a maximal chain of length $k$ consisting of left-modular elements, and there are precisely $k$ join- and $k$ meet-irreducible elements in this interval.  Consequently every graded interval in $\mathcal{C}_{\gamma}$ is distributive.  This problem was open for any Coxeter group that is not a Weyl group. 

%
\end{abstract}

\maketitle

\section{Introduction}
  \label{sec:introduction}
The $\gamma$-Cambrian semilattice, denoted by $\camb$ and parametrized by a Coxeter group $W$ and a Coxeter element $\gamma\in W$, was introduced by N.~Reading and D.~Speyer in~\cite{reading11sortable}, generalizing N.~Reading's construction for finite $W$~\cites{reading06cambrian,reading07sortable}.  This family of semilattices can be seen as a generalization of the famous Tamari lattices to all Coxeter groups, in the sense that the $\gamma$-Cambrian semilattice associated with the symmetric group $\mathfrak{S}_{n}$ and the Coxeter element $\gamma=(1\;2\;\ldots\;n)$ is isomorphic to the Tamari lattice of parameter $n$.  The Tamari lattices play an important role in algebraic and geometric combinatorics, and frequently occur in many seemingly unrelated branches of mathematics~\cite{hoissen12associahedra}.  

Previously, many of the properties of the Tamari lattices have been generalized to the $\gamma$-Cambrian semilattices, such as EL-shellability and the topology of their order complexes~\cites{kallipoliti13on,pilaud13el}, congruence-uniformity~\cites{reading03lattice,reading11sortable,santocanale13sublattices}, or semidistributivity~\cite{reading11sortable}.  Another property that the Tamari lattices enjoy is trimness, introduced by H.~Thomas~\cite{thomas06analogue}.  A finite lattice of length $k$ is \alert{trim} if it possesses a left-modular chain of length $k$, and if it has precisely $k$ join- and $k$ meet-irreducible elements.  An interesting observation that goes back to G.~Markowsky~\cite{markowsky92primes} is the fact that graded trim lattices are distributive.  Hence trimness can be seen as a generalization of distributivity to ungraded lattices.  It was conjectured in \cite{thomas06analogue} that $\camb$ is trim for any finite Coxeter group $W$, and any Coxeter element $\gamma\in W$.  In the same paper, it was shown that this conjecture is true when $W$ is of type $A$ or $B$, using the permutation representation of these groups and the definition of $\camb$ as posets on certain pattern-avoiding permutations~\cite{thomas06analogue}*{Theorems~8~and~9}.  Subsequently, it was shown in \cite{ingalls09noncrossing}*{Theorem~4.17} that said conjecture holds when $W$ is a Weyl group, using the definition of $\camb$ as posets on torsion classes of certain quiver representations.  It is the purpose of this paper to prove the trimness of $\camb$ in full generality, namely when $W$ is an arbitrary (perhaps infinite) Coxeter group, and $\gamma\in W$ is any Coxeter element.  More precisely, we prove the following theorem.

\begin{theorem}\label{thm:cambrian_trim}
	Every closed interval in a $\gamma$-Cambrian semilattice $\camb$ is trim for any Coxeter group $W$ and any Coxeter element $\gamma\in W$.  In particular, every graded closed interval in $\camb$ is distributive. 
\end{theorem}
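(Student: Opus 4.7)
The plan is to proceed by induction on the rank of the Coxeter group $W$, using the recursive description of $\gamma$-sortable elements due to Reading and Speyer~\cite{reading11sortable}. Fix an initial letter $s \in S$ of $\gamma$, so that $\gamma = s\gamma'$. Then every $\gamma$-sortable element $w$ either is supported on $S \setminus \{s\}$, in which case $w$ is $s\gamma s$-sortable in the parabolic subgroup generated by $S \setminus \{s\}$, or has $s$ as a left descent, in which case $sw$ is $\gamma'$-sortable. This dichotomy induces a compatible decomposition of $\camb$, which is the engine of the induction.

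Given a closed interval $[u, v]$ in $\camb$ of length $k$, I would split into three cases according to the position of $s$. First, if $s$ is not a left descent of $v$, then neither is it of $u$, and the entire interval sits inside a Cambrian semilattice on a parabolic of strictly smaller rank, so induction applies. Second, if $s$ is a left descent of both $u$ and $v$, then left multiplication by $s$ yields an isomorphism with an interval $[su, sv]$ in $\mathcal{C}_{\gamma'}$, and again induction applies. Third, and crucially, if $s$ is a left descent of $v$ but not of $u$, one identifies the minimum element $u' \in [u, v]$ having $s$ as a left descent; this splits $[u, v]$ into the sub-intervals $[u, u']$ (handled by Case~1) and $[u', v]$ (handled by Case~2). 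A left-modular maximal chain in $[u, v]$ is then built by concatenating the chains produced inductively in these two pieces, and the join- and meet-irreducibles of $[u, v]$ should biject with the disjoint union of those appearing in the two pieces.

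The main obstacle is the third case: verifying that the concatenated chain remains left-modular \emph{at} the transition element $u'$, i.e.\ that $(y \vee u') \wedge z = y \vee (u' \wedge z)$ for all $y \le z$ in $[u, v]$. This is not automatic and would require exploiting semidistributivity of $\camb$, established in~\cite{reading11sortable}, together with the behavior of the canonical downward projection sending an arbitrary element of $W$ to the largest $\gamma$-sortable element below it; the key will be to show that this projection commutes appropriately with the meets and joins computed in the interval $[u, v]$. A secondary obstacle is the verification that no join- or meet-irreducible of $[u, v]$ is lost or double-counted in the decomposition at $u'$: this should follow from congruence-uniformity of $\camb$, which supplies a canonical bijection between join- and meet-irreducibles, combined with the observation that the unique cover of $u$ lying in $[u, u']$ already contributes one new join-irreducible and one new meet-irreducible of each type. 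Once the left-modular chain and the irreducible counts are in hand, the conclusion that graded closed intervals are distributive follows immediately from Markowsky's theorem cited in the introduction.
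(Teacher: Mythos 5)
Your induction skeleton (rank-and-length induction via the sortable recursion, with semidistributivity as the auxiliary tool) is essentially the one the paper uses, but the two verifications you defer in Case~3 are precisely where the content of the theorem lies, and as sketched they do not go through. The first gap concerns the left-modular chain. Left-modularity is not a local property: an element that is left-modular in a subinterval need not be left-modular in the ambient interval, since \eqref{eq:left_modular} must hold against \emph{all} pairs $y<z$, including those leaving the subinterval. In your Case~3 the element $u'=s\vee u$ covers $u$ (Lemma~\ref{lem:covering_join}), so $[u,u']$ is a two-element chain and contributes nothing; the real issue is that for every chain element lying in $[u',v]$, induction certifies the criterion of Theorem~\ref{thm:left_modularity} only against cover pairs $y\lessdot z$ with $s\leq_{\gamma}y$. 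You flag only the single transition element $u'$, but the pairs with $s\not\leq_{\gamma}y$ must be handled for \emph{every} element of the chain. This is exactly where the paper works hardest (Lemmas~\ref{lem:clm_2} and~\ref{lem:clm_3}): one replaces $y,z$ by $y'=s\vee y$ and $z'=s\vee z$, applies the inductive left-modularity inside the upper piece, and rules out the bad alternatives by combining Lemma~\ref{lem:covering_join} with semidistributivity. A vague appeal to the downward projection onto sortable elements does not substitute for this computation.

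The second gap is the count of irreducibles. The proposed bijection with the disjoint union of the irreducibles of the two pieces is false: the sole meet-irreducible of the two-element interval $[u,u']$ is $u$, which in general has several upper covers in $[u,v]$, and the genuinely new meet-irreducible of $[u,v]$ is not an irreducible of either piece --- it is $x=\bigvee\{t\in[u,v]_{\gamma}\mid s\not\leq_{\gamma}t\}$, which lies in that set by Lemma~\ref{lem:join_parabolic}, dominates every meet-irreducible not above $s$ (Lemma~\ref{lem:meet_irreducibles}), and has $s\vee x$ as its unique upper cover. Without this identification you have no argument that the set of elements not above $s$ cannot contain two incomparable maximal elements, each meet-irreducible. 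Moreover, the join-irreducibles should not be counted by a dual decomposition at all: semidistributivity already forces $\bigl\lvert\mathcal{J}\bigr\rvert=\bigl\lvert\mathcal{M}\bigr\rvert$ (Lemma~\ref{lem:semidistributive_same_irreducibles}); congruence-uniformity is not needed. Two smaller points: since intervals of trim lattices are trim (Theorem~\ref{thm:trim_intervals}), you may restrict to $[\varepsilon,w]_{\gamma}$ from the outset, which collapses your Case~3 to $u=\varepsilon$ and $u'=s$; and the two branches of your recursion are transposed --- if $s\leq_{S}w$ then $sw$ is $s\gamma s$-sortable, while if $s\not\leq_{S}w$ then $w$ is $s\gamma$-sortable in $W_{\langle s\rangle}$.
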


Recall that when $W$ is infinite, $\camb$ is only a semilattice since it does not possess a maximal element.  However, any closed interval in $\camb$ is a finite lattice in its own right, and Theorem~\ref{thm:cambrian_trim} thus implies the local trimness of $\camb$. 

The proof of Theorem~\ref{thm:cambrian_trim} uses the definition of $\camb$ in terms of sortable elements, and is thus uniform.  One ingredient for proving Theorem~\ref{thm:cambrian_trim} is the semidistributivity of $\camb$ that was established in \cite{reading11sortable}*{Section~8}.  Semidistributivity can be seen as another generalization of distributivity to ungraded lattices, but it is different from trimness. Consider for instance the lattice in Figure~\ref{fig:trim_not_semi}.  This is a trim lattice, since it has length $4$, it has exactly four join- and meet-irreducible elements, and the highlighted chain consists of left-modular elements.  However, it is not semidistributive, since it is precisely one of the minimal lattices that were used in \cite{davey75characterization} to characterize the obstructions to semidistributivity.  Conversely, Figure~\ref{fig:semi_not_trim} shows a semidistributive lattice that is not trim, since it has four join- and four meet-irreducible elements, but length only three.

\begin{figure}
	\centering
	\subfigure[A trim lattice that is not semidistributive.]{\label{fig:trim_not_semi}\begin{tikzpicture}\small
		\def\x{.75};
		\def\y{.75};
		\draw(3*\x,1*\y) node[fill,circle,scale=.5](n1){};
		\draw(2*\x,2*\y) node[fill,circle,scale=.5](n2){};
		\draw(1*\x,3*\y) node[fill,circle,scale=.5](n3){};
		\draw(3*\x,3*\y) node[fill,circle,scale=.5](n4){};
		\draw(5*\x,3*\y) node[fill,circle,scale=.5](n5){};
		\draw(4*\x,4*\y) node[fill,circle,scale=.5](n6){};
		\draw(3*\x,5*\y) node[fill,circle,scale=.5](n7){};
		\draw[very thick](n1) -- (n2);
		\draw(n1) -- (n5);
		\draw(n2) -- (n3);
		\draw[very thick](n2) -- (n4);
		\draw(n3) -- (n7);
		\draw[very thick](n4) -- (n6);
		\draw(n5) -- (n6);
		\draw[very thick](n6) -- (n7);
	\end{tikzpicture}}
	\hspace*{1cm}
	\subfigure[A semidistributive lattice that is not trim.]{\label{fig:semi_not_trim}\begin{tikzpicture}\small
		\def\x{1};
		\def\y{1};
		\draw(0*\x,2*\y) node{};
		\draw(4*\x,2*\y) node{};
		\draw(2*\x,1*\y) node[fill,circle,scale=.5](n1){};
		\draw(1*\x,2*\y) node[fill,circle,scale=.5](n2){};
		\draw(3*\x,2*\y) node[fill,circle,scale=.5](n3){};
		\draw(1*\x,3*\y) node[fill,circle,scale=.5](n4){};
		\draw(3*\x,3*\y) node[fill,circle,scale=.5](n5){};
		\draw(2*\x,4*\y) node[fill,circle,scale=.5](n6){};
		\draw(n1) -- (n2);
		\draw(n1) -- (n3);
		\draw(n2) -- (n4);
		\draw(n3) -- (n5);
		\draw(n4) -- (n6);
		\draw(n5) -- (n6);
	\end{tikzpicture}}
	\caption{Different generalizations of distributivity.}
	\label{fig:semi_trim}
\end{figure}
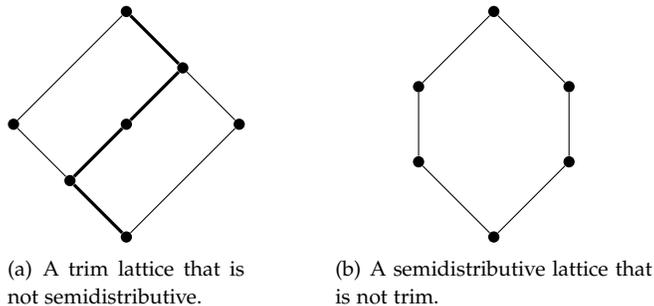

\smallskip

We recall the necessary background on Coxeter groups in Section~\ref{sec:coxeter_groups}, and recall the definition of the $\gamma$-Cambrian semilattices in Section~\ref{sec:cambrian_lattices}.  We define the non-standard poset-theoretical concepts whenever needed, and refer to \cite{davey02introduction} for any undefined terminology.  In Section~\ref{sec:trimness}, we prove Theorem~\ref{thm:cambrian_trim}. 

\section{Preliminaries}
  \label{sec:preliminaries}
In this section we give the definitions needed in the remainder of this article.  For more information on Coxeter groups, we refer to \cite{bjorner05combinatorics} and \cite{humphreys90reflection}.  An excellent exposition on $\gamma$-Cambrian semilattices is \cite{reading11sortable}.  Throughout the article we use the abbreviation $[n]=\{1,2,\ldots,n\}$.

\subsection{Coxeter Groups}
  \label{sec:coxeter_groups}
A \alert{Coxeter system} $(W,S)$ is a pair given by a group $W$ (whose identity is denoted by $\varepsilon$) and a subset $S\subseteq W$ with $S=\{s_{1},s_{2},\ldots,s_{n}\}$ such that $W$ admits the presentation 
\begin{displaymath}
	W=\bigl\langle S\mid (s_{i}s_{j})^{m_{i,j}}=\varepsilon\;\text{for}\;i,j\in[n]\bigr\rangle.
\end{displaymath}
In this presentation each $m_{i,j}$ is either a positive integer or the formal symbol $\infty$, and these numbers satisfy $m_{i,j}=m_{j,i}\geq 1$ for $i,j\in[n]$, and $m_{i,j}=1$ if and only if $i=j$.  In particular, $m_{i,j}=\infty$ means that there is no relation between the generators $s_{i}$ and $s_{j}$.  The elements $s_{1},s_{2},\ldots,s_{n}$ are called the \alert{simple generators} of $W$ and $n$ is called the \alert{rank} of $W$.  Since $S$ generates the group $W$, every $w\in W$ can be written as a product of those simple generators.  This gives rise to the length function
\begin{displaymath}
	\ell_{S}(w)=\min\bigl\{k\mid w=s_{i_{1}}s_{i_{2}}\cdots s_{i_{k}},\;\text{where}\;s_{i_{j}}\in S\;\text{for}\;j\in[k]\bigr\}.
\end{displaymath}
Now we can define the \alert{(right) weak order} on $W$ by
\begin{align}\label{eq:weak_order}
	u\leq_{S}v\quad\text{if and only if}\quad\ell_{S}(v)=\ell_{S}(u)+\ell_{S}(u^{-1}v).
\end{align}
The poset $\WW=(W,\leq_{S})$ forms a graded meet-semilattice with rank function $\ell_{S}(w)$.  If $W$ is finite, then there exists a unique longest element $w_{o}\in W$, and $\WW$ is thus a lattice.  In general $\WW$ is \alert{finitary}, meaning that every principal order ideal of $\WW$ is finite. 

For any $J\subseteq S$ the \alert{standard parabolic subgroup} $W_{J}$ is the subgroup of $W$ generated by $J$.  For $s\in S$ we frequently use the abbreviation $\langle s\rangle=S\setminus\{s\}$, and thus $W_{\langle s\rangle}=W_{S\setminus\{s\}}$.  

\subsection{Cambrian Lattices}
  \label{sec:cambrian_lattices}
For $w\in W$ with $\ell_{S}(w)=k$, an expression $w=s_{i_{1}}s_{i_{2}}\cdots s_{i_{k}}$ is called a \alert{reduced word} for $w$.  A \alert{Coxeter element} is an element $\gamma\in W$ that has a reduced word which is a permutation of the simple generators.  Without loss of generality, we can restrict our attention to $\gamma=s_{1}s_{2}\cdots s_{n}$, and define the half-infinite word
\begin{displaymath}
	\gamma^{\infty} = s_{1}s_{2}\cdots s_{n}\vert s_{1}s_{2}\cdots s_{n}\vert s_{1}\cdots.
\end{displaymath}
The vertical bars serve only as ``dividers'' and have no influence on the structure of $\gamma^{\infty}$.  However, they yield an intuitive notion of a \alert{block} of $\gamma^{\infty}$.  It is easy to see that every reduced word for $w\in W$ can be regarded as a subword of $\gamma^{\infty}$.  Among all reduced words for $w$ there is a unique reduced word which is lexicographically first as a subword of $\gamma^{\infty}$.  We refer to this as the \alert{$\gamma$-sorting word} of $w$.  We say that $w$ is \alert{$\gamma$-sortable} if the blocks of the $\gamma$-sorting word of $w$ form a weakly decreasing sequence with respect to inclusion.  Let $C_{\gamma}$ denote the set of all $\gamma$-sortable elements of $W$. 

\begin{example}\label{ex:cambrian_1}
	Let $\tilde{C}_{3}$ be the affine Coxeter group generated by $\{s_{0},s_{1},s_{2},s_{3}\}$ with $(s_{0}s_{1})^{4}=(s_{0}s_{2})^{2}=(s_{0}s_{3})^{2}=(s_{1}s_{2})^{3}=(s_{1}s_{3})^{2}=(s_{2}s_{3})^{4}=\varepsilon$, and consider the Coxeter element 	$\gamma=s_{0}s_{1}s_{2}s_{3}$.  The element $w=s_{0}s_{2}s_{3}s_{2}\in\tilde{C}_{3}$ has precisely four reduced words, namely
	\begin{displaymath}\begin{aligned}
		& \mathbf{s_{0}s_{2}s_{3}\vert s_{2}}, && s_{2}\vert s_{0}s_{3}\vert s_{2}, && s_{2}s_{3}\vert s_{0}s_{2}, && s_{2}s_{3}\vert s_{2}\vert s_{0},
	\end{aligned}\end{displaymath}
	and the highlighted one is the $\gamma$-sorting word of $w$.  Its blocks are $b_{1}=\{s_{0},s_{2},s_{3}\}$ and $b_{2}=\{s_{2}\}$, and we have $b_{1}\supseteq b_{2}$.  Hence $w$ is $\gamma$-sortable.  On the other hand consider the element $w'=s_{0}s_{2}s_{3}s_{1}\in\tilde{C}_{3}$.  It has precisely five reduced words, namely
	\begin{displaymath}\begin{aligned}
		& \mathbf{s_{0}s_{2}s_{3}\vert s_{1}}, && s_{0}s_{2}\vert s_{1}s_{3}, && s_{2}\vert s_{0}s_{1}s_{3}, 
		&& s_{2}\vert s_{0}s_{3}\vert s_{1}, && s_{2}s_{3}\vert s_{0}s_{1},
	\end{aligned}\end{displaymath}
	and again the highlighted one is the $\gamma$-sorting word of $w'$.  Its blocks are $b'_{1}=\{s_{0},s_{2},s_{3}\}$ and $b'_{2}=\{s_{1}\}$, and we have $b'_{1}\not\supseteq b'_{2}$.  Hence $w'$ is not $\gamma$-sortable.
\end{example}

A simple generator $s\in S$ is \alert{initial in $\gamma$} if there exists some reduced word for $\gamma$ which starts with $s$.  It follows immediately from \eqref{eq:weak_order} that $s\in S$ is initial in $\gamma$ if and only if $s\leq_{S}\gamma$.  The $\gamma$-sortable elements of $W$ admit a nice recursive characterization as the next proposition shows.

\begin{proposition}[\cite{reading11sortable}*{Proposition~2.29}]\label{prop:sortable_recursion}
	Let $(W,S)$ be a Coxeter system, let $\gamma\in W$ be a Coxeter element, and let $s\in S$ be initial in $\gamma$.  An element $w\in W$ is $\gamma$-sortable if and only if either
	\begin{enumerate}[(i)]
		\item $s\leq_{S}w$ and $sw$ is $s\gamma s$-sortable, \quad or
		\item $s\not\leq_{S}w$ and w is an $s\gamma$-sortable element of $W_{\langle s\rangle}$.
	\end{enumerate}
\end{proposition}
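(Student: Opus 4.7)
The plan is to prove both implications by tracking how the $\gamma$-sorting word of $w$ transforms under the two operations on the right-hand side of the statement. Without loss of generality we may take $s=s_{1}$ and $\gamma=s_{1}s_{2}\cdots s_{n}$, so that $s\gamma s=s_{2}\cdots s_{n}s_{1}$ is again a Coxeter element of $W$ and $s\gamma=s_{2}\cdots s_{n}$ is a Coxeter element of the standard parabolic subgroup $W_{\langle s\rangle}$.

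The first preparatory step is to spell out the relationship between the three half-infinite words. Reading $\gamma^{\infty}$ as a bare letter sequence and dropping its leading $s$ yields exactly the bare letter sequence of $(s\gamma s)^{\infty}$, but with block dividers shifted one position to the right; concretely, the letter at position $(j-1)n+i$ of $\gamma^{\infty}$ occupies position $(j-1)n+(i-1)$ of $(s\gamma s)^{\infty}$ when $i\geq 2$, and the letter at position $jn+1$ of $\gamma^{\infty}$ (which is an $s$) occupies position $jn$ of $(s\gamma s)^{\infty}$. Analogously, deleting every occurrence of $s$ from $\gamma^{\infty}$ and collapsing the blocks of length $n$ to blocks of length $n-1$ yields exactly $(s\gamma)^{\infty}$, and this deletion preserves the block membership of each surviving letter. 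The second preparatory step is an auxiliary claim: when $s$ is initial in $\gamma$, one has $s\leq_{S}w$ if and only if the $\gamma$-sorting word of $w$ begins with $s$. The ``if'' direction is immediate from \eqref{eq:weak_order}; the ``only if'' direction follows because any reduced word for $w$ starting with $s$ yields a subword of $\gamma^{\infty}$ using position~$1$, and any subword of $\gamma^{\infty}$ not using position~$1$ is lexicographically larger, so the lex-first subword must use it.

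For case (i), assume $s\leq_{S}w$ and write the $\gamma$-sorting word as $v=sv'$; then $v'$ is a reduced word for $sw$ and, by the shift relation, a subword of $(s\gamma s)^{\infty}$. Lex-minimality transfers in both directions: prepending $s$ to any subword of $(s\gamma s)^{\infty}$ for $sw$ gives a subword of $\gamma^{\infty}$ for $w$ beginning with $s$, and by the auxiliary claim the lex-first subword of $\gamma^{\infty}$ for $w$ also begins with $s$, so $v'$ is the $s\gamma s$-sorting word of $sw$. Letting $B_{1}\supseteq B_{2}\supseteq\cdots$ denote the blocks of $v$ in $\gamma^{\infty}$ (with $s\in B_{1}$) and $B'_{1},B'_{2},\ldots$ the blocks of $v'$ in $(s\gamma s)^{\infty}$, the block-shift bookkeeping yields the formula
\begin{displaymath}
	B'_{k}=(B_{k}\setminus\{s\})\cup(\{s\}\cap B_{k+1}).
\end{displaymath}
A direct check on simple reflections (separately for $s$ and for $s_{i}$ with $i\neq 1$) shows that the $B_{k}$ form a weakly decreasing chain under inclusion if and only if the $B'_{k}$ do, proving both directions of (i).

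For case (ii), suppose first that $w$ is $\gamma$-sortable with $s\not\leq_{S}w$. By the auxiliary claim the first letter of the $\gamma$-sorting word $v$ is some $s_{j}$ with $j>1$, so the first block of $v$ is contained in $\{s_{j},\ldots,s_{n}\}$ and in particular avoids $s$; the weakly decreasing block condition then forces every subsequent block to avoid $s$ as well. Hence $v$ uses no $s$, and therefore $w\in W_{\langle s\rangle}$. By the restriction relation $v$ is also a subword of $(s\gamma)^{\infty}$; the block memberships of all letters of $v$ are unchanged, and the lex-orders on $\langle s\rangle$-avoiding subwords of $\gamma^{\infty}$ and subwords of $(s\gamma)^{\infty}$ coincide, so $v$ is the $s\gamma$-sorting word of $w$ and has the same decreasing block sequence. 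Hence $w$ is $s\gamma$-sortable in $W_{\langle s\rangle}$. The converse direction is entirely symmetric: starting from the $s\gamma$-sorting word of $w\in W_{\langle s\rangle}$, the same dictionary identifies it with the $\gamma$-sorting word of $w$, with matching block structure.

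I expect the main obstacle to be the book-keeping for the block shift in case (i): verifying that the correspondence $B'_{k}=(B_{k}\setminus\{s\})\cup(\{s\}\cap B_{k+1})$ really reflects the new block divisions of $(s\gamma s)^{\infty}$, and that the inclusion chain propagates correctly in both directions. Everything else reduces either to the lexicographic minimality of the sorting word or to the standard fact that the parabolic $W_{\langle s\rangle}$ consists exactly of the elements all of whose reduced words avoid $s$.
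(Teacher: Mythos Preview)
The paper does not prove this proposition; it is quoted verbatim from \cite{reading11sortable}*{Proposition~2.29} and used as a black box. So there is no ``paper's own proof'' to compare against.

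Your argument is correct and is essentially the standard combinatorial proof one finds in Reading--Speyer. The two preparatory observations---the position shift identifying $\gamma^{\infty}$ with $s$ followed by $(s\gamma s)^{\infty}$, and the fact that the $\gamma$-sorting word of $w$ begins with $s$ exactly when $s\leq_{S}w$---are the right tools, and the block formula $B'_{k}=(B_{k}\setminus\{s\})\cup(\{s\}\cap B_{k+1})$ is accurate. Your check that the inclusion chain on the $B_{k}$ transfers to the $B'_{k}$ and back (using $s\in B_{1}$ for the reverse direction at $k=1$) is complete. In case~(ii) you implicitly use the standard parabolic fact that every reduced word of an element of $W_{\langle s\rangle}$ avoids $s$; this is what guarantees that the competition for lex-minimality in $\gamma^{\infty}$ takes place entirely among $s$-avoiding subwords, so that the $\gamma$- and $s\gamma$-sorting words of $w$ coincide. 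With that fact made explicit, nothing is missing.
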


It is rather straightforward from the definition that the set $C_{\gamma}$ of $\gamma$-sortable elements does not depend on the reduced word for $\gamma$, see for instance \cite{reading11sortable}*{Section~2.7} for a detailed explanation.  A nice consequence of Proposition~\ref{prop:sortable_recursion} is that we can use induction on rank and length when proving statements about $\gamma$-sortable elements.  More precisely, Proposition~\ref{prop:sortable_recursion} states that any $\gamma$-sortable element $w\in W$ falls into one of the two categories with respect to an initial generator $s$ of $\gamma$: either $s\leq_{S}w$ or $s\not\leq_{S}w$.  In the first case we can reduce the length of $w$ and leave the rank of $W$ fixed, in the second case we can leave the length of $w$ fixed, and consider it as an element in a standard parabolic subgroup of $W$ of smaller rank.  See also the paragraph after \cite{reading11sortable}*{Proposition~2.29}.

The next result indicates the special role of the $\gamma$-sortable elements among the elements of $W$.

\begin{theorem}[\cite{reading11sortable}*{Theorem~7.1}]\label{thm:sortable_meets_joins}
	Let $A$ be a collection of $\gamma$-sortable elements of $W$.  If $A$ is nonempty, then $\bigwedge A$ is $\gamma$-sortable. If $A$ has an upper bound, then $\bigvee A$ is $\gamma$-sortable. 
\end{theorem}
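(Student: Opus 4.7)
The plan is to prove both assertions simultaneously by a double induction, primary on the rank of $W$ and secondary on the length of the meet or join under consideration, using the recursive description of $\gamma$-sortability given by Proposition~\ref{prop:sortable_recursion}. Fix a simple generator $s\in S$ initial in $\gamma$, and partition $A=A_{s}\sqcup A^{s}$ according to whether $s\leq_{S}w$. By Proposition~\ref{prop:sortable_recursion}(i), each $w\in A_{s}$ satisfies $\ell_{S}(sw)=\ell_{S}(w)-1$ and $sw$ is $s\gamma s$-sortable; by Proposition~\ref{prop:sortable_recursion}(ii), each $w\in A^{s}$ lies in $W_{\langle s\rangle}$ and is $s\gamma$-sortable as an element thereof.

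For the meet, consider first the case $A^{s}=\emptyset$. Then $s$ is a common lower bound of $A$, so $s\leq_{S}\bigwedge A$. Left-multiplication by $s$ is a weak-order isomorphism from the upper set $\{u:s\leq_{S}u\}$ onto the lower set $\{u:s\not\leq_{S}u\}$, and it carries meets to meets, so $s\cdot\bigwedge A=\bigwedge\{sw:w\in A\}$. The right-hand side is a meet of $s\gamma s$-sortable elements of strictly smaller length, which is $s\gamma s$-sortable by the inductive hypothesis, and hence $\bigwedge A$ is $\gamma$-sortable by Proposition~\ref{prop:sortable_recursion}(i). When $A^{s}\neq\emptyset$, pick $w'\in A^{s}\subseteq W_{\langle s\rangle}$. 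Since $W_{\langle s\rangle}$ is an order ideal of $\WW$, the inequality $\bigwedge A\leq_{S}w'$ forces $\bigwedge A\in W_{\langle s\rangle}$. The parabolic projection $\pi\colon W\to W_{\langle s\rangle}$ that sends $w$ to the greatest element of $W_{\langle s\rangle}$ beneath it then rewrites $\bigwedge A=\bigwedge_{w\in A}\pi(w)$, and a direct check via Proposition~\ref{prop:sortable_recursion} shows that $\pi$ carries $\gamma$-sortable elements to $s\gamma$-sortable ones. The inductive hypothesis on rank, applied to $(W_{\langle s\rangle},s\gamma)$, concludes that $\bigwedge A$ is $s\gamma$-sortable in $W_{\langle s\rangle}$, hence $\gamma$-sortable in $W$ by Proposition~\ref{prop:sortable_recursion}(ii).

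For the join, the hypothesis that $A$ has an upper bound ensures that $\bigvee A$ exists. The argument parallels the meet case. Split on whether $s\leq_{S}\bigvee A$: in the positive case, the left-multiplication-by-$s$ isomorphism reduces the problem to a join of $s\gamma s$-sortables of strictly smaller length, after first replacing each $w\in A^{s}$ by $s\vee w$ (which exists by the upper-bound hypothesis and is $\gamma$-sortable by an inductive invocation of the theorem on the two-element set $\{s,w\}$). In the negative case, every $w\in A$ satisfies $s\not\leq_{S}w$, so $A\subseteq W_{\langle s\rangle}$; the upper bound of $A$ in $W$ projects under $\pi$ to an upper bound of $A$ in $W_{\langle s\rangle}$, the two joins coincide, and the inductive hypothesis on rank finishes the argument via Proposition~\ref{prop:sortable_recursion}(ii).

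The main obstacle is establishing the parabolic-projection facts invoked in the second subcase of each of the meet and the join: that $\pi$ respects meets (fairly quick, since $W_{\langle s\rangle}$ is an order ideal in a meet-semilattice) and, more delicately, that $\pi$ sends $\gamma$-sortable elements of $W$ to $s\gamma$-sortable elements of $W_{\langle s\rangle}$. The latter must be checked by a careful case analysis on the recursive structure encoded in the $\gamma$-sorting word, keeping track of the interplay between the initial generator $s$, the conjugate Coxeter element $s\gamma s$, and the restriction $s\gamma$ to the parabolic. The other technical point is the induction for the join, which must navigate the fact that the upper-bound hypothesis does not automatically descend to arbitrary subsets or to projected elements.
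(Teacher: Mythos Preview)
The paper does not prove this statement; it is quoted from \cite{reading11sortable}*{Theorem~7.1} and used as a black box throughout. There is therefore no proof in the present paper to compare your proposal against.

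For what it is worth, your inductive scheme via Proposition~\ref{prop:sortable_recursion} is the one Reading and Speyer themselves employ, and the two points you flag as obstacles are indeed the substantive content of their argument. That the parabolic restriction $\pi$ carries $\gamma$-sortable elements to $s\gamma$-sortable elements is a separate lemma in \cite{reading11sortable}, not a direct check. More seriously, your join argument in the case $s\leq_{S}\bigvee A$ is circular as written: you invoke the theorem on the pair $\{s,w\}$ to deduce that $s\vee w$ is $\gamma$-sortable, but your secondary inductive parameter is the length of the join, and nothing rules out $\ell_{S}(s\vee w)=\ell_{S}\bigl(\bigvee A\bigr)$ --- take for instance $A=\{s,w\}$ itself. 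One must instead establish directly, outside the main induction, that $s\vee w$ is $\gamma$-sortable whenever $w$ is and $s$ is initial in $\gamma$.
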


This theorem implies that $C_{\gamma}$ equipped with the weak order constitutes a sub-semilattice of $\WW$, the so-called 
\alert{$\gamma$-Cambrian semilattice} of $W$, usually denoted by $\camb=(C_{\gamma},\leq_{\gamma})$, where $\leq_{\gamma}$ is the restriction of $\leq_{S}$ to $C_{\gamma}$.  Moreover, for two $\gamma$-sortable elements $u,v\in C_{\gamma}$ with $u\leq_{\gamma}v$, we denote the induced interval in $\camb$ by $[u,v]_{\gamma}=\{x\in C_{\gamma}\mid u\leq_{\gamma}x\leq_{\gamma}v\}$.  Figure~\ref{fig:cambrian_c3_interval} shows an interval in the $\gamma$-Cambrian semilattice of the affine Coxeter group $\tilde{C}_{3}$ and the Coxeter element $\gamma$ from Example~\ref{ex:cambrian_1}.

\begin{figure}
	\centering
	\begin{tikzpicture}\small
		\def\x{1.12};
		\def\y{.85};
		\draw(4.5*\x,1*\y) node(n1){$\varepsilon$};
		\draw(3*\x,2.5*\y) node(n2){$s_{0}$};
		\draw(4.5*\x,3*\y) node(n3){$s_{2}$};
		\draw(1*\x,4.5*\y) node(n4){$s_{0}s_{1}$};
		\draw(6.25*\x,4.75*\y) node(n5){$s_{2}s_{3}$};
		\draw(1*\x,5.5*\y) node(n6){$s_{0}s_{1}s_{2}$};
		\draw(3*\x,5.5*\y) node(n7){$s_{0}s_{2}$};
		\draw(1*\x,6.5*\y) node(n8){\hspace*{-.25cm}$s_{0}s_{1}s_{2}s_{1}$};
		\draw(2*\x,6.5*\y) node(n9){\hspace*{.25cm}$s_{0}s_{1}s_{2}s_{3}$};
		\draw(4.75*\x,7.25*\y) node(n10){$s_{0}s_{2}s_{3}$};
		\draw(2*\x,7.5*\y) node(n11){$s_{0}s_{1}s_{2}s_{3}s_{1}$};
		\draw(9*\x,7.5*\y) node(n12){$s_{2}s_{3}s_{2}$};
		\draw(11.5*\x,8*\y) node(n13){$s_{3}$};
		\draw(2.75*\x,8.25*\y) node(n14){$s_{0}s_{1}s_{2}s_{3}s_{1}s_{2}$};
		\draw(4.5*\x,9*\y) node(n15){$s_{0}s_{1}s_{2}s_{3}s_{2}$};
		\draw(2.75*\x,9.25*\y) node(n16){$s_{0}s_{1}s_{2}s_{3}s_{1}s_{2}s_{3}$};
		\draw(10*\x,9.5*\y) node(n17){$s_{0}s_{3}$};
		\draw(4.5*\x,10*\y) node(n18){$s_{0}s_{1}s_{2}s_{3}s_{1}s_{2}s_{1}$};
		\draw(7.5*\x,10*\y) node(n19){$s_{0}s_{2}s_{3}s_{2}$};
		\draw(11.5*\x,10*\y) node(n20){$s_{2}s_{3}s_{2}s_{3}$};
		\draw(4.5*\x,11*\y) node(n21){$s_{0}s_{1}s_{2}s_{3}s_{1}s_{2}s_{3}s_{1}$};
		\draw(8*\x,11.5*\y) node(n22){$s_{0}s_{1}s_{3}$};
		\draw(5.5*\x,12*\y) node(n23){$s_{0}s_{1}s_{2}s_{3}s_{1}s_{2}s_{3}s_{1}s_{2}$};
		\draw(10*\x,12.5*\y) node(n24){$s_{0}s_{2}s_{3}s_{2}s_{3}$};
		\draw(8*\x,12.5*\y) node(n25){$s_{0}s_{1}s_{2}s_{3}s_{2}s_{3}$};
		\draw(8*\x,14.5*\y) node(n26){$s_{0}s_{1}s_{2}s_{3}s_{1}s_{2}s_{3}s_{1}s_{2}s_{3}$};
		\draw[very thick](n1) -- (n2);
		\draw(n1) -- (n3);
		\draw(n1) -- (n13);
		\draw[very thick](n2) -- (n4);
		\draw(n2) -- (n7);
		\draw(n2) -- (n17);
		\draw(n3) -- (n7);
		\draw(n3) -- (n5);
		\draw[very thick](n4) -- (n6);
		\draw(n4) -- (n22);
		\draw(n5) -- (n10);
		\draw(n5) -- (n12);
		\draw(n6) -- (n8);
		\draw[very thick](n6) -- (n9);
		\draw(n7) -- (n8);
		\draw(n7) -- (n10);
		\draw(n8) -- (n11);
		\draw[very thick](n9) -- (n11);
		\draw(n9) -- (n15);
		\draw(n10) -- (n16);
		\draw(n10) -- (n19);
		\draw[very thick](n11) -- (n14);
		\draw(n12) -- (n19);
		\draw(n12) -- (n20);
		\draw(n13) -- (n17);
		\draw(n13) -- (n20);
		\draw[very thick](n14) -- (n16);
		\draw(n14) -- (n18);
		\draw(n15) -- (n18);
		\draw(n15) -- (n25);
		\draw[very thick](n16) -- (n21);
		\draw(n17) -- (n22);
		\draw(n17) -- (n24);
		\draw(n18) -- (n21);
		\draw(n19) -- (n23);
		\draw(n19) -- (n24);
		\draw(n20) -- (n24);
		\draw[very thick](n21) -- (n23);
		\draw(n22) -- (n25);
		\draw[very thick](n23) -- (n26);
		\draw(n24) -- (n26);
		\draw(n25) -- (n26);
	\end{tikzpicture}
	\caption{An interval in the $s_{0}s_{1}s_{2}s_{3}$-Cambrian semilattice of $\tilde{C}_{3}$.}
	\label{fig:cambrian_c3_interval}
\end{figure}
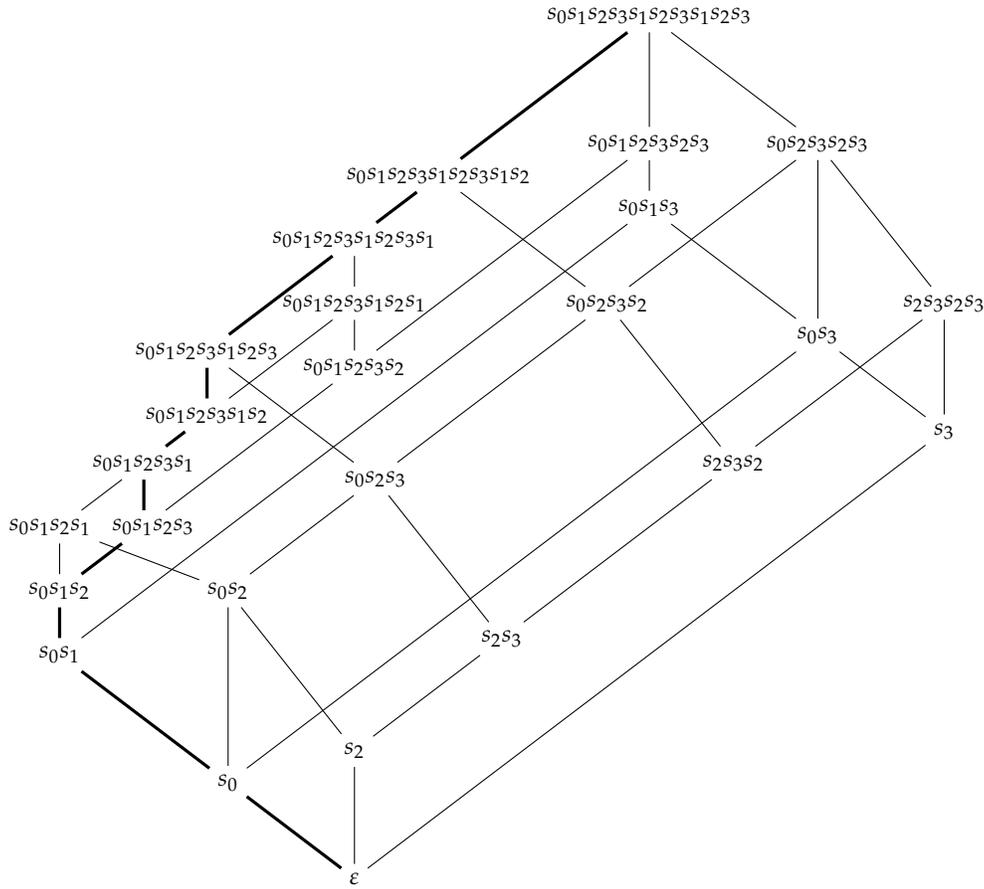

As observed in \cite{reading11sortable}, the $\gamma$-Cambrian semilattices have some important lattice-theoretic properties.  Recall that a lattice $\LL=(L,\leq)$ is \alert{join-semidistributive} if it satisfies
\begin{equation}
	\label{eq:join_semi}x\vee y=x\vee z\quad\text{implies}\quad x\vee y=x\vee(y\wedge z)
\end{equation}
for all $x,y,z\in L$.  It is \alert{meet-semidistributive} if its dual is join-semidistributive, and \alert{semidistributive} if it is both join- and meet-semidistributive.  

An important property of finite join-semidistributive lattices is that these are precisely the finite lattices in which each element has a so-called \alert{canonical join-representation}, \ie each element can be written as the join over a minimal, irredundant set of elements in the lattice.  (Here, minimal has a double meaning: on the one hand, the set in question should have the smallest possible cardinality, and on the other hand, its elements should be as close to the bottom of the lattice as possible.)  A reference for this result is for instance \cite{freese95free}*{Theorem~2.24}.  We also refer the reader to \cite{freese95free}*{Chapters~I.3~and~II.5} for more background on canonical forms in lattices and semidistributivity.  By duality, we establish the same connection between canonical meet-representations and meet-semidistributivity.  Now, \cite{reading11sortable}*{Theorem~8.1} states that any element in a Coxeter group $W$ has a canonical join-representation in the weak order semilattice, and  \cite{reading11sortable}*{Proposition~8.2} states that for any Coxeter element $\gamma\in W$ and any $\gamma$-sortable element $w\in W$, the elements in the canonical join-representation of $w$ (with respect to the weak order semilattice) are again $\gamma$-sortable, and in particular form a canonical join-representation in the corresponding $\gamma$-Cambrian semilattice.  As a consequence, any closed interval in the $\gamma$-Cambrian semilattice $\camb$ of $W$ is join-semidistributive.  Proposition~2.19 in \cite{reading11sortable} implies that the dual of any closed interval in $\camb$ appears as some interval on some Cambrian semilattice of $W$, and we thus have the following result.

\begin{proposition}\label{prop:cambrian_semidistributive}
	Every closed interval in $\camb$ is semidistributive for any Coxeter group $W$ and any Coxeter element $\gamma\in W$. 
\end{proposition}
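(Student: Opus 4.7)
The plan is to deduce the proposition from three results of Reading and Speyer, all invoked in the paragraph preceding the statement: Theorem~8.1 and Proposition~8.2 of \cite{reading11sortable}, which together produce a canonical join-representation inside $\camb$ for every $\gamma$-sortable element, and Proposition~2.19 of \cite{reading11sortable}, which realizes the order dual of any closed interval in $\camb$ as a closed interval in a (possibly different) Cambrian semilattice. The standard equivalence between the existence of canonical join-representations and join-semidistributivity in finite lattices (\cite{freese95free}*{Theorem~2.24}) then converts the first input into the lattice-theoretic property we want.

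First I would establish join-semidistributivity of every closed interval $[u,v]_{\gamma}$. The Reading--Speyer results say that each $w \in C_{\gamma}$ has a canonical join-representation using $\gamma$-sortable joinands; by the Freese--Jezek--Nation criterion, this is precisely what is needed to conclude that whenever $x \vee y$ and $x \vee z$ exist and coincide in $\camb$, one has $x \vee y = x \vee (y \wedge z)$ in $\camb$. Since $[u,v]_{\gamma}$ is a finite lattice whose meet and join agree with those of $\camb$ on elements lying in the interval (the join exists by Theorem~\ref{thm:sortable_meets_joins} because $v$ is an upper bound), this semidistributivity identity is inherited and $[u,v]_{\gamma}$ is join-semidistributive.

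Second, for meet-semidistributivity, I would invoke Proposition~2.19 of \cite{reading11sortable} to identify the order dual $[u,v]_{\gamma}^{op}$ with a closed interval $[u',v']_{\gamma'}$ in a Cambrian semilattice $\mathcal{C}_{\gamma'}$ associated to another Coxeter element. The first half of the argument applied to $[u',v']_{\gamma'}$ shows that it is join-semidistributive, which says precisely that $[u,v]_{\gamma}$ is meet-semidistributive. Combining the two halves yields semidistributivity of $[u,v]_{\gamma}$.

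The argument is essentially bookkeeping around the three cited Reading--Speyer results, so there is no genuine obstacle. The only point requiring a moment of attention is that a canonical join-representation of an element $w \in [u,v]_{\gamma}$ in the ambient $\camb$ may involve joinands below $u$, so to witness canonicity inside the interval one may need to replace each joinand $w_i$ by $w_i \vee u$; this preserves the join and the irredundance, and is the routine inheritance step underlying the claim that join-semidistributivity passes to closed intervals.
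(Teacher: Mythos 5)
Your proposal is correct and matches the paper's own argument, which is given in the paragraph preceding the proposition: canonical join-representations of sortable elements (Reading--Speyer, Theorem~8.1 and Proposition~8.2) combined with the Freese--Je\v{z}ek--Nation criterion yield join-semidistributivity of closed intervals, and Proposition~2.19 of Reading--Speyer supplies meet-semidistributivity by duality. Your closing remark about re-basing joinands at $u$ is unnecessary, since the quasi-identity defining join-semidistributivity passes directly to any interval whose meets and joins agree with the ambient ones, but it does no harm.
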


\section{Trimness of Closed Intervals of $\camb$}
  \label{sec:trimness}
Let $\LL=(L,\leq)$ be a lattice with least element $\hat{0}$ and greatest element $\hat{1}$.  For two elements $x,y\in L$ with $x<y$ we say that $x$ \alert{is a lower cover of} $y$ if there is no element $z\in L$ with $x<z<y$, and we usually denote this by $x\lessdot y$.  In this situation we also say that $y$ \alert{is an upper cover of} $x$, or that $x$ and $y$ form a \alert{cover relation}.  Recall that $x\in L\setminus\{\hat{0}\}$ is \alert{join-irreducible} if for every set $X\subseteq L$ with $x=\bigvee X$ we have $x\in X$.  In other words $x$ cannot be expressed as the join of some elements of $\LL$ strictly below $x$.  Join-irreducible elements can be easily spotted in the Hasse diagram of $\LL$ since they are precisely the elements with a unique lower cover.  Let $\mathcal{J}(\LL)$ denote the set of join-irreducible elements of $\LL$.  Dually, $x\in L\setminus\{\hat{1}\}$ is \alert{meet-irreducible} if it cannot be expressed as the meet of some elements of $\LL$ strictly above $x$.  The meet-irreducible elements of $\LL$ are precisely the elements with exactly one upper cover, and we denote the set of all meet-irreducible elements of $\LL$ by $\mathcal{M}(\LL)$.  

A \alert{maximal chain} of $\LL$ is a sequence of cover relations $\hat{0}\lessdot x_{1}\lessdot x_{2}\lessdot\cdots\lessdot x_{s}\lessdot\hat{1}$.  The maximum length of a maximal chain of $\LL$ is the \alert{length} of $\LL$, usually denoted by $\ell(\LL)$.  It is easy to see that the number of join- and meet-irreducibles, respectively, cannot be less than the length of a lattice.  In that sense $\LL$ is \alert{extremal} if $\bigl\lvert\mathcal{J}(\LL)\bigr\rvert=\ell(\LL)=\bigl\lvert\mathcal{M}(\LL)\bigr\rvert$~\cite{markowsky92primes}.   

\begin{example}\label{ex:cambrian_2}
	Let us continue with Example~\ref{ex:cambrian_1}, and consider the lattice in Figure~\ref{fig:cambrian_c3_interval}.  The ten join-irreducible elements of this lattice are
	\begin{displaymath}\begin{aligned}
		& s_{0}, && s_{2}, && s_{3}, && s_{0}s_{1}, && s_{2}s_{3}, && s_{0}s_{1}s_{2}, && s_{2}s_{3}s_{2}, && s_{0}s_{1}s_{2}s_{3}, && s_{0}s_{1}s_{2}s_{3}s_{2}, && s_{0}s_{1}s_{2}s_{3}s_{1}s_{2},
	\end{aligned}\end{displaymath}
	and the ten meet-irreducible elements are
	\begin{displaymath}\begin{aligned}
		& s_{0}s_{1}s_{3}, && s_{0}s_{1}s_{2}s_{1}, && s_{2}s_{3}s_{2}s_{3}, && s_{0}s_{1}s_{2}s_{3}s_{1},\\
		& s_{0}s_{2}s_{3}s_{2}s_{3}, && s_{0}s_{1}s_{2}s_{3}s_{2}s_{3}, && s_{0}s_{1}s_{2}s_{3}s_{1}s_{2}s_{1},\\ 
		& s_{0}s_{1}s_{2}s_{3}s_{1}s_{2}s_{3}, && s_{0}s_{1}s_{2}s_{3}s_{1}s_{2}s_{3}s_{1}, && s_{0}s_{1}s_{2}s_{3}s_{1}s_{2}s_{3}s_{1}s_{2}.
	\end{aligned}\end{displaymath}
	Since for instance the chain on the left boundary of the Hasse diagram of this lattice consists of eleven elements, and we can quickly check that there exists no chain with more elements, we conclude that this lattice has length ten, and is thus extremal.  
\end{example}

The following statement shows that extremality can be seen as a generalization of distributivity to ungraded lattices. 

\begin{theorem}[\cite{markowsky92primes}*{Theorem~17}]\label{thm:graded_extremal_distributive}
	A graded extremal lattice is distributive.
\end{theorem}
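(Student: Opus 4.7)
The plan is to show that the map $\phi\colon L\to\mathcal{D}\bigl(\mathcal{J}(\LL)\bigr)$, $x\mapsto\{j\in\mathcal{J}(\LL): j\leq x\}$, is a lattice embedding into the distributive lattice of downsets of $\mathcal{J}(\LL)$; since any sublattice of a distributive lattice is distributive, this finishes the proof.

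First I would fix a maximal chain $\hat{0}=x_{0}\lessdot x_{1}\lessdot\cdots\lessdot x_{k}=\hat{1}$, which has length $k=\ell(\LL)$ by gradedness, and for each $i\in[k]$ pick a join-irreducible $j_{i}\leq x_{i}$ with $j_{i}\not\leq x_{i-1}$; such a $j_{i}$ exists because $x_{i}$ is the join of the join-irreducibles below it, not all of which can lie below $x_{i-1}$. The $j_{i}$ are pairwise distinct, so the extremality hypothesis forces $\mathcal{J}(\LL)=\{j_{1},\ldots,j_{k}\}$. An easy induction using the cover relation $x_{i-1}\lessdot x_{i}$ then shows $x_{i}=j_{1}\vee\cdots\vee j_{i}$ and $\mathcal{J}(\LL)_{\leq x_{i}}=\{j_{1},\ldots,j_{i}\}$; in particular $\mathrm{rank}(x_{i})=\bigl\lvert\mathcal{J}(\LL)_{\leq x_{i}}\bigr\rvert$. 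Repeating the argument for a maximal chain passing through an arbitrary element upgrades this to
\begin{equation*}
\mathrm{rank}(x)=\bigl\lvert\mathcal{J}(\LL)_{\leq x}\bigr\rvert\text{ for every }x\in L,
\end{equation*}
and the dual argument using $\lvert\mathcal{M}(\LL)\rvert=k$ gives $k-\mathrm{rank}(x)=\bigl\lvert\mathcal{M}(\LL)_{\geq x}\bigr\rvert$.

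Combining these rank formulae with the tautological equalities $\mathcal{J}(\LL)_{\leq x\wedge y}=\mathcal{J}(\LL)_{\leq x}\cap\mathcal{J}(\LL)_{\leq y}$ and $\mathcal{M}(\LL)_{\geq x\vee y}=\mathcal{M}(\LL)_{\geq x}\cap\mathcal{M}(\LL)_{\geq y}$, together with the trivial inclusions $\mathcal{J}(\LL)_{\leq x}\cup\mathcal{J}(\LL)_{\leq y}\subseteq\mathcal{J}(\LL)_{\leq x\vee y}$ and $\mathcal{M}(\LL)_{\geq x}\cup\mathcal{M}(\LL)_{\geq y}\subseteq\mathcal{M}(\LL)_{\geq x\wedge y}$, I would derive $\mathrm{rank}(x\vee y)+\mathrm{rank}(x\wedge y)\geq\mathrm{rank}(x)+\mathrm{rank}(y)$ from the join-irreducible side and the opposite inequality from the meet-irreducible side (after translating via corank). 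Forcing equality at the level of cardinalities then upgrades the set-theoretic inclusion $\mathcal{J}(\LL)_{\leq x}\cup\mathcal{J}(\LL)_{\leq y}\subseteq\mathcal{J}(\LL)_{\leq x\vee y}$ to an equality, so $\phi$ preserves joins. Since $\phi$ also preserves meets and is injective (both hold in every finite lattice because each element is the join of the join-irreducibles below it), $\phi$ is a lattice embedding, and $\LL$ is distributive.

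The main obstacle is the rank identity $\mathrm{rank}(x)=\lvert\mathcal{J}(\LL)_{\leq x}\rvert$: without extremality one has only the inequality $\lvert\mathcal{J}(\LL)_{\leq x}\rvert\geq\mathrm{rank}(x)$ coming from any single maximal chain up to $x$, and it is precisely the tight global count $\lvert\mathcal{J}(\LL)\rvert=\ell(\LL)$ that rules out extra join-irreducibles $\leq x$ beyond those arising along the chosen chain through $x$.
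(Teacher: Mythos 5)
Your proof is correct. Note, however, that the paper contains no proof of this statement to compare against: it is quoted from Markowsky (Theorem~17 of the cited reference) and used as a black box, so your self-contained argument is genuinely additional content. The argument itself checks out at every step: the $j_{i}$ exist because $x_{i}=\bigvee\mathcal{J}(\LL)_{\leq x_{i}}$ cannot hold if all join-irreducibles below $x_{i}$ lay below $x_{i-1}$, and they are pairwise distinct because $j_{i'}\leq x_{i}\leq x_{i'-1}$ for $i<i'$ would contradict the choice of $j_{i'}$; extremality then identifies $\mathcal{J}(\LL)$ with $\{j_{1},\dots,j_{k}\}$ and yields $\mathcal{J}(\LL)_{\leq x_{i}}=\{j_{1},\dots,j_{i}\}$; gradedness is exactly what lets you run this along a maximal chain through an arbitrary $x$, which it reaches at step $\mathrm{rank}(x)$, giving the key identity $\mathrm{rank}(x)=\bigl\lvert\mathcal{J}(\LL)_{\leq x}\bigr\rvert$ and its dual. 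Squeezing the two inclusion-exclusion inequalities against each other forces $\bigl\lvert\mathcal{J}(\LL)_{\leq x\vee y}\bigr\rvert=\bigl\lvert\mathcal{J}(\LL)_{\leq x}\cup\mathcal{J}(\LL)_{\leq y}\bigr\rvert$, and since one set contains the other this is an equality of sets, so $\phi$ preserves joins as well as meets and is injective; its image is then a sublattice of a Boolean lattice, hence distributive. A common alternative at the penultimate step is to read off modularity from $\mathrm{rank}(x\vee y)+\mathrm{rank}(x\wedge y)=\mathrm{rank}(x)+\mathrm{rank}(y)$ and then exclude $M_{3}$ sublattices; your direct verification that $\phi$ is a join-homomorphism is cleaner and avoids that detour.
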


However, G.~Markowsky observed in \cite{markowsky92primes}*{Theorem~14(ii)} that any finite lattice can be embedded as an interval into an extremal lattice, which in particular means that extremality is not inherited to intervals.  In order to overcome this issue, H.~Thomas introduced a refinement of extremality that we will describe next.  Recall that $x\in L$ is \alert{left-modular} if for all $y,z\in L$ with $y<z$ we have 
\begin{equation}\label{eq:left_modular}
	(y\vee x)\wedge z = y\vee(x\wedge z).
\end{equation}
If $\LL$ has length $n$ and there exists a maximal chain $\hat{0}=x_{0}\lessdot x_{1}\lessdot\cdots\lessdot x_{n}=\hat{1}$ such that $x_{i}$ is left-modular for all $i\in\{0,1,\ldots,n\}$, then $\LL$ is \alert{left-modular}.  Left-modularity admits the following handy characterization.

\begin{theorem}[\cite{liu00left}*{Theorem~1.4}]\label{thm:left_modularity}
	Let $\LL=(L,\leq)$ be a finite lattice and let $x\in L$.  The following are equivalent:
	\begin{enumerate}[(i)]
		\item the element $x$ is left-modular;
		\item for any $y,z\in L$ with $y<z$, we have $x\wedge y\neq x\wedge z$ or $x\vee y\neq x\vee z$;\quad and
		\item for any $y,z\in L$ with $y\lessdot z$, we have $x\wedge y=x\wedge z$ or $x\vee y=x\vee z$ but not both.
	\end{enumerate}
\end{theorem}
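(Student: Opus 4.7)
The plan is to establish the three-way equivalence cyclically: (i) $\Rightarrow$ (ii) $\Rightarrow$ (iii) $\Rightarrow$ (i). Throughout I keep in mind the elementary fact that for any $y \leq z$ in a lattice one automatically has $y \vee (x \wedge z) \leq (y \vee x) \wedge z$, so left-modularity is really the assertion that this one-sided inequality always collapses to an equality.

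For (i) $\Rightarrow$ (ii), I assume $x$ is left-modular and suppose toward contradiction that some $y < z$ satisfies both $x \wedge y = x \wedge z$ and $x \vee y = x \vee z$. Substituting these equalities into the left-modularity identity \eqref{eq:left_modular} yields
\begin{displaymath}
    z = (z \vee x) \wedge z = (y \vee x) \wedge z = y \vee (x \wedge z) = y \vee (x \wedge y) = y,
\end{displaymath}
contradicting $y < z$.

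For (ii) $\Rightarrow$ (iii), I fix a cover $y \lessdot z$ and introduce the auxiliary elements $y' := y \vee (x \wedge z)$ and $z' := (y \vee x) \wedge z$. Both lie in the two-element interval $[y, z]$, and the standing inequality forces $y' \leq z'$. A short absorption calculation gives the dictionary $y' = y \iff x \wedge y = x \wedge z$ and $z' = z \iff x \vee y = x \vee z$. Among the four possible values of $(y', z') \in \{y, z\}^2$, the ordering $y' \leq z'$ rules out $(z, y)$, and each of the remaining three cases forces at least one of the two equalities of (iii). The ``but not both'' clause is just (ii) applied to the strict pair $y < z$.

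For (iii) $\Rightarrow$ (i), I fix $y < z$ and set $a := y \vee (x \wedge z)$, $b := (y \vee x) \wedge z$, aiming to prove $a = b$. Routine absorption gives $x \wedge a = x \wedge z = x \wedge b$ and $x \vee a = x \vee y = x \vee b$. If $a < b$, I choose a maximal chain $a = w_0 \lessdot w_1 \lessdot \cdots \lessdot w_k = b$. The monotone sequence $x \wedge w_i$ has coinciding endpoints and so is constant, which means $x \wedge w_i = x \wedge w_{i+1}$ at every step. The ``but not both'' clause of (iii) then forces $x \vee w_i < x \vee w_{i+1}$ for every $i$, contradicting $x \vee a = x \vee b$. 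I expect the main obstacle to be spotting the right auxiliary elements $y'$ and $z'$ in the step (ii) $\Rightarrow$ (iii); once they are in place, both nontrivial directions reduce to a cover-by-cover walk combined with a small case analysis.
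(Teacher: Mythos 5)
Your proof is correct. Note that the paper itself gives no argument for this statement---it is quoted verbatim from \cite{liu00left}*{Theorem~1.4}---so there is no internal proof to compare against; your cyclic argument (i)$\Rightarrow$(ii)$\Rightarrow$(iii)$\Rightarrow$(i), with the auxiliary elements $y'=y\vee(x\wedge z)$ and $z'=(y\vee x)\wedge z$ trapped in the two-element cover interval and the cover-by-cover walk from $y\vee(x\wedge z)$ up to $(y\vee x)\wedge z$, is a complete and self-contained verification of the cited result.
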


A lattice that is both extremal and left-modular is \alert{trim}~\cite{thomas06analogue}, and this property is inherited to intervals.

\begin{theorem}[\cite{thomas06analogue}*{Theorem~1}]\label{thm:trim_intervals}
	Any interval of a trim lattice is trim.
\end{theorem}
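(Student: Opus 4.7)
The plan is to start with a left-modular maximal chain in the ambient trim lattice $\LL$ and project it down to the interval $[a,b]$, then verify that the projected chain witnesses both left-modularity and extremality of $[a,b]$. Let $\LL$ be trim with left-modular maximal chain $\hat{0}=x_{0}\lessdot x_{1}\lessdot\cdots\lessdot x_{n}=\hat{1}$ where $n=\ell(\LL)$. For any interval $[a,b]\subseteq L$, define
\begin{equation*}
    y_{i}=(a\vee x_{i})\wedge b\qquad\text{for}\;i\in\{0,1,\ldots,n\}.
\end{equation*}
Then $y_{0}=a$, $y_{n}=b$, and since $x_{i-1}\le x_{i}$ we have $y_{i-1}\le y_{i}$. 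Any cover in the induced sub-semilattice $[a,b]$ is also a cover in $\LL$, so after deleting repetitions the sequence $(y_{i})$ becomes a chain of $[a,b]$ consisting of covers in $[a,b]$.

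Next I would verify that each $y_{i}$ is left-modular in $[a,b]$ by using the operational characterization from Theorem~\ref{thm:left_modularity}(ii). The key computation is that for any $u\in[a,b]$,
\begin{equation*}
    y_{i}\wedge u=(a\vee x_{i})\wedge b\wedge u=(a\vee x_{i})\wedge u=a\vee(x_{i}\wedge u),
\end{equation*}
where the last equality is left-modularity of $x_{i}$ in $\LL$ applied to $a\le u$; a dual calculation gives $y_{i}\vee u$ as a comparable clean expression in $x_{i}\vee u$. Thus whenever $u<v$ in $[a,b]$ the left-modularity of $x_{i}$ in $\LL$ forces $y_{i}\wedge u\neq y_{i}\wedge v$ or $y_{i}\vee u\neq y_{i}\vee v$, so $y_{i}$ is left-modular in $[a,b]$. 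In particular the reduced chain is a left-modular maximal chain in $[a,b]$, and its length equals $\ell([a,b])$.

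For extremality, I would exploit the standard fact about trim lattices that the left-modular chain induces bijections $i\mapsto j_{i}$ and $i\mapsto m_{i}$ between $\{1,\ldots,n\}$ and $\mathcal{J}(\LL)$ and $\mathcal{M}(\LL)$, characterized by $j_{i}\le x_{i}$ but $j_{i}\not\le x_{i-1}$, and dually for $m_{i}$. For each index $i$ with $y_{i-1}\lessdot y_{i}$, I would attach the canonical join-representative of $y_{i}$ in $[a,b]$ realized by $(a\vee j_{i})\wedge b$, and dually $(b\wedge m_{i})\vee a$ for the meet side. Via the computation above (with $u=y_{i-1}$) these contribute exactly one join- and one meet-irreducible of $[a,b]$ per proper step, while any join-irreducible of $[a,b]$ arises this way since its unique lower cover must appear as some $y_{i-1}\lessdot y_{i}$ and then be tracked back to a unique $j_{i}$; the extremality count $\lvert\mathcal{J}(\LL)\rvert=n=\lvert\mathcal{M}(\LL)\rvert$ then guarantees no collisions. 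Hence $\lvert\mathcal{J}([a,b])\rvert=\ell([a,b])=\lvert\mathcal{M}([a,b])\rvert$.

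The main obstacle is the last paragraph: ensuring that the assignment $i\mapsto j_{i}\mapsto\text{irreducible of }[a,b]$ is genuinely a bijection rather than merely a surjection or an injection. The delicate point is controlling what happens at the indices $i$ where $y_{i-1}=y_{i}$ — one must argue that the corresponding $j_{i}$ does not produce a spurious join-irreducible of $[a,b]$ — and, conversely, that every join-irreducible of $[a,b]$ is hit. This is where the extremality of $\LL$ (not merely the left-modular chain alone) is essential, and where one appeals to the bijective structure behind Markowsky's and Thomas's characterizations.
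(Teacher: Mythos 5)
The paper does not actually prove this statement --- it is quoted as Theorem~1 of Thomas's paper --- so I am judging your argument on its own merits. Your projection $y_{i}=(a\vee x_{i})\wedge b$ is the right device, and the left-modularity of each $y_{i}$ in $[a,b]$ does go through, though more cleanly by verifying \eqref{eq:left_modular} directly: for $u<v$ in $[a,b]$ one computes $(u\vee y_{i})\wedge v=(u\vee x_{i})\wedge v=u\vee(x_{i}\wedge v)=u\vee(y_{i}\wedge v)$, using left-modularity of $x_{i}$ twice. Your own route via Theorem~\ref{thm:left_modularity}(ii) has a hole: from $x_{i}\wedge u\neq x_{i}\wedge v$ you cannot conclude $y_{i}\wedge u\neq y_{i}\wedge v$, since $y_{i}\wedge u=a\vee(x_{i}\wedge u)$ and joining with $a$ may collapse distinct values (dually for meets with $b$). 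Likewise, that consecutive distinct $y_{i}$ form covers in $[a,b]$ is true but requires an argument (again via left-modularity of $x_{i-1}$ and $x_{i}$), and the claim that the reduced chain has length exactly $\ell([a,b])$ cannot simply be asserted --- a saturated chain need not realize the length of a lattice.

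The genuine gap is the extremality count, as you half-concede. The sentence ``any join-irreducible of $[a,b]$ arises this way since its unique lower cover must appear as some $y_{i-1}\lessdot y_{i}$'' is false: the chain $(y_{i})$ contains only $m$ of the cover relations of $[a,b]$, and there is no reason the lower cover of a given join-irreducible of $[a,b]$ is one of them. What is needed is an injection from $\mathcal{J}\bigl([a,b]\bigr)$ into the set of indices $i$ with $y_{i-1}<y_{i}$ (for instance $j\mapsto\min\{i\mid j\leq y_{i}\vee j_{*}\}$, where $j_{*}$ is the unique lower cover of $j$ in $[a,b]$), together with the dual injection for $\mathcal{M}\bigl([a,b]\bigr)$; once $\bigl\lvert\mathcal{J}\bigl([a,b]\bigr)\bigr\rvert\leq m$ and $\bigl\lvert\mathcal{M}\bigl([a,b]\bigr)\bigr\rvert\leq m$ are established, the inequalities $m\leq\ell\bigl([a,b]\bigr)\leq\bigl\lvert\mathcal{J}\bigl([a,b]\bigr)\bigr\rvert$ collapse and both the length claim and extremality follow at once. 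Proving injectivity of that map is exactly where the extremality of $\LL$ (via the indexing $j_{i}\leq x_{i}$, $j_{i}\not\leq x_{i-1}$ of the $n$ join-irreducibles) must enter: for a merely left-modular lattice such as the modular lattice $M_{3}$ of length $2$ with three atoms, the analogous map is not injective. This bijective bookkeeping is the substance of Thomas's proof, and it is the part missing from yours.
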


Let $\gamma=s_{1}s_{2}\cdots s_{n}$, let $w\in C_{\gamma}$ such that $\ell_{S}(w)=k$, and let $w=r_{1}r_{2}\cdots r_{k}$ be the $\gamma$-sorting word of $w$.  It is immediate that the length of the interval $[\varepsilon,w]_{\gamma}$ is $k$ and that the chain
\begin{equation}\label{eq:chain}
	\cc:\varepsilon=x_{0}\lessdot_{\gamma}x_{1}\lessdot_{\gamma}\cdots\lessdot_{\gamma}x_{k}=w,
\end{equation}
with $x_{i}=r_{1}r_{2}\cdots r_{i}$ for $i\in[k]$ is a maximal chain, see also \cite{kallipoliti13on}*{Remark~3.12}.  We prove next that this chain is in fact a left-modular chain of $[\varepsilon,w]_{\gamma}$.  For this we need the following result.

\begin{lemma}[\cite{kallipoliti13on}*{Lemma~3.10}]\label{lem:covering_join}
	Let $\gamma=s_{1}s_{2}\cdots s_{n}$, and let $u,v\in C_{\gamma}$ with $u\leq_{\gamma}v$.  If $s_{1}\not\leq_{\gamma}u$ and $s_{1}\leq_{\gamma}v$, then the join $s_{1}\vee u$ covers $u$ in $\camb$.  In particular if $u\lessdot_{\gamma}v$, then $s_{1}\vee u=v$.
\end{lemma}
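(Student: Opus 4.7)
The plan is to first secure existence of the join $s_{1}\vee u$ in $\camb$, then show it covers $u$ by case analysis on whether a hypothetical intermediate element contains $s_{1}$ in its reduced words.

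Since $v$ is a common upper bound of $s_{1}$ and $u$ in $C_{\gamma}$, Theorem~\ref{thm:sortable_meets_joins} guarantees that $s_{1}\vee u$ exists in $\camb$ and coincides with the weak-order join in $\WW$; denote this element by $w^{*}$. The hypothesis $s_{1}\not\leq_{S}u$ gives $w^{*}\neq u$, so $u<_{\gamma}w^{*}$. Applying Proposition~\ref{prop:sortable_recursion}(i) to $w^{*}$ (which is valid since $s_{1}\leq_{S}w^{*}$), we can write $w^{*}=s_{1}x$ where $x$ is $s_{1}\gamma s_{1}$-sortable and $\ell_{S}(w^{*})=\ell_{S}(x)+1$.

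Suppose for contradiction that there exists $w\in C_{\gamma}$ with $u<_{\gamma}w<_{\gamma}w^{*}$. Proposition~\ref{prop:sortable_recursion} applied to $w$ yields two cases. If $s_{1}\leq_{S}w$, then $w$ is a common upper bound of $s_{1}$ and $u$, forcing $w\geq_{\gamma}w^{*}$ and contradicting $w<_{\gamma}w^{*}$. Otherwise $s_{1}\not\leq_{S}w$, and Proposition~\ref{prop:sortable_recursion}(ii) places $w$, and likewise $u$, in $W_{\langle s_{1}\rangle}$ as $s_{1}\gamma$-sortable elements. The goal in this subcase would be to derive $w\leq_{S}u$, contradicting $u<_{\gamma}w$; the idea is that $w\leq_{S}s_{1}x$ combined with $w\in W_{\langle s_{1}\rangle}$ should force $w$ to lie below the ``$s_{1}$-free component'' of $w^{*}$, which in turn must coincide with $u$.

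The main obstacle lies precisely in this last step: translating the inequality $w\leq_{S}s_{1}x$ inside $W$ into a comparison within $W_{\langle s_{1}\rangle}$ and ultimately into $w\leq_{S}u$. I would approach this by induction on $\ell_{S}(w^{*})$, using the recursive form of the $\gamma$-sorting word of $w^{*}$ (which begins with $s_{1}$ and whose tail is an $s_{1}\gamma s_{1}$-sorting word of $x$), together with the fact that a standard parabolic subgroup forms an order ideal in the right weak order. The ``in particular'' clause is then immediate: if $u\lessdot_{\gamma}v$, then $v$ is a common upper bound of $s_{1}$ and $u$, so $u<_{\gamma}s_{1}\vee u\leq_{\gamma}v$, and the cover relation forces $s_{1}\vee u=v$.
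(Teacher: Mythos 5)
This lemma is imported by the paper from \cite{kallipoliti13on}*{Lemma~3.10} without proof, so there is no internal argument to compare against; I therefore assess your proposal on its own terms. Your setup is sound: Theorem~\ref{thm:sortable_meets_joins} does give the existence and sortability of $w^{*}=s_{1}\vee u$, the case $s_{1}\leq_{S}w$ of a hypothetical intermediate element is correctly dispatched (such a $w$ is an upper bound of $\{s_{1},u\}$), and the ``in particular'' clause is correct and does not even need the covering statement.

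The genuine gap is the case $s_{1}\not\leq_{S}w$, which you yourself flag as ``the main obstacle'': this case is the entire content of the lemma, and the sketch you give for it is circular. Asserting that the ``$s_{1}$-free component'' of $w^{*}$ (the maximal sortable element below $w^{*}$ not above $s_{1}$) ``must coincide with $u$'' is exactly equivalent to what you are trying to prove in this case, since any sortable $w$ with $u<_{\gamma}w<_{\gamma}w^{*}$ and $s_{1}\not\leq_{S}w$ would force that component strictly above $u$. Moreover, the step cannot be carried out from the weak-order inequality $w\leq_{S}s_{1}x$ alone: in $A_{2}$ with $S=\{s,t\}$ and $\gamma=st$, taking $u=t$ gives $s\vee u=sts$, and the (non-sortable) element $ts$ satisfies $t<_{S}ts<_{S}sts$ with $s\not\leq_{S}ts$; so $\ell_{S}(s_{1}\vee u)$ need not be $\ell_{S}(u)+1$ and the cover holds only in $\camb$, not in $\WW$. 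What you actually need is a concrete new ingredient, e.g.\ that for $u\in W_{\langle s_{1}\rangle}$ the element $u$ is the \emph{maximal element of $W_{\langle s_{1}\rangle}$ below $s_{1}\vee u$} (so that $w\in W_{\langle s_{1}\rangle}$, which follows from Proposition~\ref{prop:sortable_recursion}(ii), combined with $w\leq_{S}w^{*}$ yields $w\leq_{S}u$). That projection statement is plausible but is a genuine lemma about inversion sets (or about the parabolic projection) in possibly infinite Coxeter groups, and your proposal neither states nor proves it; the appeal to ``$W_{\langle s_{1}\rangle}$ is an order ideal'' and to induction on $\ell_{S}(w^{*})$ does not substitute for it.
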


It is the purpose of this article to establish the trimness of closed intervals in $\camb$, and as a first step we prove the following proposition, which asserts that the chain in \eqref{eq:chain} is always left-modular.  

\begin{proposition}\label{prop:cambrian_left_modular}
	For $w\in C_{\gamma}$, the chain in \eqref{eq:chain} is a left-modular maximal chain in $[\varepsilon,w]_{\gamma}$.
\end{proposition}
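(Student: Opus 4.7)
The plan is to induct on the rank $n$ of $(W,S)$ and on the length $\ell_{S}(w)$, following the recursive structure of Proposition~\ref{prop:sortable_recursion}. Let $s = s_{1}$, which is initial in $\gamma$. The base cases (zero rank or $w = \varepsilon$) are immediate. If $s \not\leq_{S} w$, then by Proposition~\ref{prop:sortable_recursion}(ii) the element $w$ is $s\gamma$-sortable inside $W_{\langle s\rangle}$; since every element of $\camb$ below $w$ also lies in $W_{\langle s\rangle}$, the interval $[\varepsilon, w]_{\gamma}$ coincides as a lattice with the corresponding interval in the $s\gamma$-Cambrian semilattice of $W_{\langle s\rangle}$, and the two sorting words of $w$ agree; the induction hypothesis at smaller rank then finishes this case.

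So I would focus on the case $s \leq_{S} w$, where $r_{1} = s$ and Proposition~\ref{prop:sortable_recursion}(i) says that $sw$ is $s\gamma s$-sortable with sorting word $r_{2}r_{3}\cdots r_{k}$. By induction on length, the chain $\varepsilon \lessdot r_{2} \lessdot r_{2}r_{3} \lessdot \cdots \lessdot sw$ is left-modular in $[\varepsilon, sw]_{s\gamma s}$. A short verification, using Proposition~\ref{prop:sortable_recursion}(i) and the definition of the weak order, shows that left multiplication by $s$ is a lattice isomorphism $[\varepsilon, sw]_{s\gamma s} \to [s, w]_{\gamma}$. Under this isomorphism the inductive chain transports to $s = x_{1} \lessdot x_{2} \lessdot \cdots \lessdot x_{k} = w$, so this chain is left-modular inside the sub-interval $[s, w]_{\gamma}$.

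The remaining step, and the main obstacle, is to upgrade left-modularity of each $x_{i}$ from $[s,w]_{\gamma}$ to the full interval $[\varepsilon, w]_{\gamma}$, and to verify it for $x_{1} = s$ as well. I would use Liu's criterion (Theorem~\ref{thm:left_modularity}(iii)): for every cover $y \lessdot_{\gamma} z$ in $[\varepsilon, w]_{\gamma}$, check that exactly one of $x_{i}\wedge y = x_{i}\wedge z$ or $x_{i}\vee y = x_{i}\vee z$ holds. I would split on the position of $s$ relative to $y$ and $z$. When $s \leq_{\gamma} y$ the cover sits inside $[s,w]_{\gamma}$. When $s \not\leq_{\gamma} y$ but $s \leq_{\gamma} z$, Lemma~\ref{lem:covering_join} gives $z = s \vee y$, so $x_{i}\vee y = x_{i}\vee z$, while $s \leq x_{i}\wedge z$ and $s \not\leq x_{i}\wedge y$ force the meets to differ. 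The delicate sub-case is $s \not\leq_{\gamma} y$ and $s \not\leq_{\gamma} z$: I would lift the cover to $s \vee y \lessdot_{\gamma} s \vee z$ in $[s,w]_{\gamma}$ (a cover, as Lemma~\ref{lem:covering_join} together with a length count shows), invoke the just-established left-modularity of $x_{i}$ on $[s,w]_{\gamma}$ there, and translate the resulting XOR back down. The join side transfers immediately because $x_{i}\vee y = x_{i}\vee(s\vee y)$ and likewise for $z$; the meet side is handled by a short case analysis that exploits the fact that $s \vee y$ covers $y$, so any element of $\camb$ below $s\vee y$ is either equal to $s\vee y$ or lies below $y$.

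This last transfer is the one I expect to require the most care: the other sub-cases are essentially forced by Lemma~\ref{lem:covering_join}, but reconciling the meet-behaviour of $x_{i}$ with an arbitrary cover in the lower part of $[\varepsilon, w]_{\gamma}$ requires the combinatorial bookkeeping sketched above. Once it is carried out, Proposition~\ref{prop:cambrian_left_modular} follows.
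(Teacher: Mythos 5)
Your overall architecture matches the paper's: induction on rank and length, reduction to the parabolic $W_{\langle s\rangle}$ when $s\not\leq_{S}w$, the isomorphism $x\mapsto s x$ onto $[\varepsilon,sw]_{s\gamma s}$ to get left-modularity of the chain inside $[s,w]_{\gamma}$, and the case split on the position of $s$ relative to a cover $y\lessdot_{\gamma}z$. The first two sub-cases ($s\leq_{\gamma}y$, and $s\not\leq_{\gamma}y$ with $s\leq_{\gamma}z$) are handled exactly as in the paper, via Lemma~\ref{lem:covering_join}. The problem is the third sub-case, $s\not\leq_{\gamma}z$, which you correctly identify as the crux but whose treatment contains a genuine gap. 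Two specific assertions in your sketch do not hold up. First, $s\vee y\lessdot_{\gamma}s\vee z$ is not justified by ``a length count'': $\camb$ is ungraded and cover relations in it can jump length by more than one (see, e.g., the cover $s_{0}s_{2}s_{3}\lessdot_{\gamma}s_{0}s_{1}s_{2}s_{3}s_{1}s_{2}s_{3}$ in Figure~\ref{fig:cambrian_c3_interval}), so comparing lengths proves nothing about covers. (Fortunately this cover claim is also unnecessary: one only needs $s\vee y<_{\gamma}s\vee z$, which follows from Lemma~\ref{lem:covering_join}, and then the left-modular identity \eqref{eq:left_modular} for $y'<z'$ suffices.) Second, the statement ``any element of $\camb$ below $s\vee y$ is either equal to $s\vee y$ or lies below $y$'' is false: $s$ itself lies below $s\vee y$ but not below $y$. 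The covering relation $y\lessdot_{\gamma}s\vee y$ only controls the interval $[y,s\vee y]_{\gamma}$, not the full order ideal below $s\vee y$, so the ``short case analysis'' you lean on for the meet side does not exist as described.

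More substantively, the transfer of the exclusive-or from the pair $(s\vee y,\,s\vee z)$ down to $(y,z)$ cannot be done by order-theoretic bookkeeping alone. The join side transfers, as you say, since $x_{i}\vee y=x_{i}\vee(s\vee y)$; but in the scenario where $x_{i}\vee y=x_{i}\vee z$ \emph{and} $x_{i}\wedge y=x_{i}\wedge z$ both hold, ruling this out requires an extra structural input. In the paper this is join-semidistributivity of $[\varepsilon,w]_{\gamma}$ (Proposition~\ref{prop:cambrian_semidistributive}): setting $y'=s\vee y$ and $z'=s\vee z$, one gets $y'\vee(x_{i}\wedge z')=z'=y'\vee z$ from left-modularity of $x_{i}$ in $[s,w]_{\gamma}$, while $y'\vee\bigl((x_{i}\wedge z')\wedge z\bigr)=y'<_{\gamma}z'$, contradicting \eqref{eq:join_semi}. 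Your proposal never invokes semidistributivity, and without it (or some replacement) the hardest sub-case remains open. The complementary scenario, where both the joins and the meets of $(y,z)$ with $x_{i}$ differ, is ruled out by a separate argument using $y=(y'\vee x_{i})\wedge z$, which also does not appear in your sketch. So the proof is incomplete precisely where you flagged the difficulty.
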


The proof of Proposition~\ref{prop:cambrian_left_modular} is split into four parts, which are treated in Lemmas~\ref{lem:clm_1}--\ref{lem:clm_4} below, and we proceed by induction on rank and length.  Suppose that $W$ has rank $n$ and that $w\in W$ is $\gamma$-sortable with $\ell_{S}(w)=k$.  In view of Theorem~\ref{thm:left_modularity}, proving Proposition~\ref{prop:cambrian_left_modular} amounts to showing that for any $y\lessdot_{\gamma}z\leq_{\gamma}w$ we have 
\begin{equation}\label{eq:target}
	\text{either}\quad x_{j}\wedge y=x_{j}\wedge z\quad\text{or}\quad x_{j}\vee y=x_{j}\vee z\quad\text{(but not both)},
\end{equation}
for any $j\in[k]$, since the identity is trivially left-modular.  The base cases for our induction on rank and length---namely when $n=2$ or $k=2$---are trivial.  Throughout the next four lemmas, we assume that the claim of Proposition~\ref{prop:cambrian_left_modular} is true for all standard parabolic subgroups of $W$ having rank $<n$, and for all $\gamma'$-sortable elements $w'\in W$ for some Coxeter element $\gamma'\in W$ with $\ell_{S}(w')<k$.  

\begin{lemma}\label{lem:clm_1}
	If $s_{1}\leq_{\gamma}y\lessdot_{\gamma}z\leq_{\gamma}w$, then \eqref{eq:target} is satisfied.
\end{lemma}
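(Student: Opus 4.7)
The plan is to exploit the recursive structure of sortable elements (Proposition~\ref{prop:sortable_recursion}) to peel off the leading $s_1$ and reduce the problem to an instance of smaller length, where the induction hypothesis applies. The hypothesis $s_1 \leq_\gamma y \leq_\gamma w$ forces $s_1 \leq_\gamma w$, so the $\gamma$-sorting word of $w$ begins with $s_1$, i.e., $r_1 = s_1$. Consequently $x_j$ starts with $s_1$ for every $j \geq 1$, the word $r_2 r_3 \cdots r_k$ is the $s_1\gamma s_1$-sorting word of $s_1 w$, and both $s_1 y$ and $s_1 z$ are $s_1\gamma s_1$-sortable.

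First I would verify that the map $\phi : u \mapsto s_1 u$ is an isomorphism from the interval $[s_1, w]_{\gamma}$ onto $[\varepsilon, s_1 w]_{s_1\gamma s_1}$. On $\gamma$-sortable elements above $s_1$, Proposition~\ref{prop:sortable_recursion}(i) already gives a bijection with the $s_1\gamma s_1$-sortable elements below $s_1 w$. For order-preservation, note that if $s_1 \leq_S u, v$ then $\ell_S(s_1 u) = \ell_S(u) - 1$, $\ell_S(s_1 v) = \ell_S(v) - 1$, and $(s_1 u)^{-1}(s_1 v) = u^{-1} v$; substituting into \eqref{eq:weak_order} shows $u \leq_S v$ iff $s_1 u \leq_S s_1 v$. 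Since both $[s_1, w]_\gamma$ and $[\varepsilon, s_1 w]_{s_1\gamma s_1}$ are lattices (closed intervals in $\camb$ and $\mathcal{C}_{s_1\gamma s_1}$ respectively), $\phi$ is a lattice isomorphism, and meets/joins computed in $[s_1, w]_\gamma$ coincide with those in $\camb$ because all elements under consideration lie in that interval.

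Next I would invoke the induction hypothesis on length: since $\ell_S(s_1 w) = k - 1 < k$, Proposition~\ref{prop:cambrian_left_modular} applies to $s_1 w$ with respect to the Coxeter element $s_1\gamma s_1$, so the chain $\cc' : \varepsilon \lessdot r_2 \lessdot r_2 r_3 \lessdot \cdots \lessdot r_2 \cdots r_k = s_1 w$ is left-modular in $[\varepsilon, s_1 w]_{s_1\gamma s_1}$. Writing $x'_{j-1} = s_1 x_j$ for $j \geq 1$, the element $x'_{j-1}$ is left-modular in $[\varepsilon, s_1 w]_{s_1\gamma s_1}$. Applying Theorem~\ref{thm:left_modularity}(iii) to $x'_{j-1}$ and the cover $s_1 y \lessdot_{s_1\gamma s_1} s_1 z$ yields either $x'_{j-1} \wedge s_1 y = x'_{j-1} \wedge s_1 z$ or $x'_{j-1} \vee s_1 y = x'_{j-1} \vee s_1 z$, but not both. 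Transferring through $\phi^{-1}$ gives \eqref{eq:target} for all $j \geq 1$. The case $j = 0$ is immediate, since $x_0 = \varepsilon$ forces $x_0 \wedge y = \varepsilon = x_0 \wedge z$ while $x_0 \vee y = y \neq z = x_0 \vee z$.

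The main obstacle is the verification that $\phi$ is genuinely a lattice isomorphism onto the target interval, and in particular that the Cambrian meets and joins commute with multiplication by $s_1$ on the left. Everything else is bookkeeping around the inductive hypothesis; once $\phi$ is in place, Theorem~\ref{thm:left_modularity} does the rest of the work.
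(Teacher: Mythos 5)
Your proposal is correct and follows essentially the same route as the paper: transport the problem through the isomorphism $u\mapsto s_{1}u$ from $[s_{1},w]_{\gamma}$ onto $[\varepsilon,s_{1}w]_{s_{1}\gamma s_{1}}$, apply the induction hypothesis on length to the shorter element $s_{1}w$, and pull the conclusion of Theorem~\ref{thm:left_modularity} back. The only difference is cosmetic: you reverify that the map is an order isomorphism from the length formula, whereas the paper simply cites Proposition~2.18 of Reading--Speyer for this fact.
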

\begin{proof}
	Proposition~2.18 in \cite{reading11sortable} implies that the map $x\mapsto s_{1}x$ is an isomorphism from the weak order interval $[s_{1},w]$ to the weak order interval $[\varepsilon,s_{1}w]$, and Proposition~\ref{prop:sortable_recursion} implies that this isomorphism restricts to an isomorphism from the closed interval $[\varepsilon,w]_{\gamma}$ in $\camb$ to the interval $[\varepsilon,s_{1}w]_{s_{1}\gamma s_{1}}$ in $\mathcal{C}_{s_{1}\gamma s_{1}}$.  Let $\cc'$ be the restriction of the chain in \eqref{eq:chain} to the interval $[s_{1},w]_{\gamma}$, \ie $\cc':s_{1}=x_{1}\lessdot_{\gamma}x_{2}\lessdot_{\gamma}\cdots\lessdot_{\gamma}x_{k}=w$.  The above isomorphism maps $\cc'$ to the maximal chain
	\begin{displaymath}
		\varepsilon=s_{1}x_{1}\lessdot_{\gamma}s_{1}x_{2}\lessdot_{\gamma}\cdots\lessdot_{\gamma}s_{1}x_{k}=s_{1}w,
	\end{displaymath}
	in $[\varepsilon,s_{1}w]_{s_{1}\gamma s_{1}}$, and it sends $y$ to $s_{1}y$ and $z$ to $s_{1}z$.  Since it is an isomorphism, we have $s_{1}y\lessdot_{s_{1}\gamma s_{1}}s_{1}z$, and since $\ell_{S}(s_{1}w)=k-1<k$, we conclude by induction on length that the we have 
	\begin{multline*}
		\text{either}\quad s_{1}x_{j}\wedge_{s_{1}\gamma s_{1}} s_{1}y=s_{1}x_{j}\wedge_{s_{1}\gamma s_{1}} s_{1}z\quad\text{or}\\
			s_{1}x_{j}\vee_{s_{1}\gamma s_{1}} s_{1}y=s_{1}x_{j}\vee_{s_{1}\gamma s_{1}} s_{1}z\quad\text{(but not both)}. 
		\end{multline*}
	(Here the subscripts ``$s_{1}\gamma s_{1}$'' are supposed to indicate that joins and meets are taken within the closed interval $[\varepsilon,s_{1}w]_{s_{1}\gamma s_{1}}$ of $\mathcal{C}_{s_{1}\gamma s_{1}}$.)  Since $[s_{1},w]_{\gamma}$ and $[\varepsilon,s_{1}w]_{s_{1}\gamma s_{1}}$ are isomorphic, we conclude that \eqref{eq:target} holds.	
\end{proof}

An immediate consequence of Lemma~\ref{lem:clm_1} is that $x_{j}$ is left-modular as an element of the interval $[s_{1},w]_{\gamma}$.

\begin{lemma}\label{lem:clm_2}
	If $s_{1}\leq_{\gamma}z\leq_{\gamma}w$, but $s_{1}\not\leq_{\gamma}y$, then \eqref{eq:target} is satisfied.
\end{lemma}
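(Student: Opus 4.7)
The plan is to exploit the asymmetric hypothesis $s_{1}\leq_{\gamma}z$ versus $s_{1}\not\leq_{\gamma}y$, combined with Lemma~\ref{lem:covering_join} applied to the cover $y\lessdot_{\gamma}z$. The first preparatory step is to observe that, since $s_{1}\leq_{\gamma}w$ and $s_{1}$ is the leftmost letter of $\gamma^{\infty}$, the $\gamma$-sorting word of $w$ must itself begin with $s_{1}$ (being the lexicographically first subword of $\gamma^{\infty}$ representing $w$). Hence $x_{1}=s_{1}$ and consequently $x_{j}\geq_{\gamma}s_{1}$ for every $j\geq 1$. The case $j=0$ is then trivial: with $x_{0}=\varepsilon$ one has $x_{0}\wedge y=\varepsilon=x_{0}\wedge z$ while $x_{0}\vee y=y\neq z=x_{0}\vee z$, so exactly the meet-equality of \eqref{eq:target} holds.

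For $j\geq 1$ I would first establish the join-equality $x_{j}\vee y=x_{j}\vee z$. Lemma~\ref{lem:covering_join}, applied to $y\lessdot_{\gamma}z$ with $s_{1}\not\leq_{\gamma}y$ and $s_{1}\leq_{\gamma}z$, yields $s_{1}\vee y=z$. Combining this with $x_{j}\geq_{\gamma}s_{1}$ gives $x_{j}\vee y\geq_{\gamma}s_{1}\vee y=z$, and therefore $x_{j}\vee y\geq_{\gamma}x_{j}\vee z$; the reverse inequality is immediate from $y\leq_{\gamma}z$. In parallel, I would argue that the meets differ: since both $x_{j}$ and $z$ sit above $s_{1}$, so does their meet, while $x_{j}\wedge y\leq_{\gamma}y$ cannot lie above $s_{1}$ by hypothesis. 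This gives the ``but not both'' clause and closes \eqref{eq:target}.

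The main obstacle is really identifying the right pre-existing tool, namely Lemma~\ref{lem:covering_join}; once invoked, both the join-equality and the meet-inequality drop out immediately. The only subtlety is that meets and joins need to be taken consistently inside the closed interval $[\varepsilon,w]_{\gamma}$ of $\camb$; this causes no trouble because $w$ is a common upper bound for all elements in play, and Theorem~\ref{thm:sortable_meets_joins} ensures these coincide with the corresponding operations computed in the ambient weak-order semilattice.
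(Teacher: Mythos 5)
Your proposal is correct and follows essentially the same route as the paper: apply Lemma~\ref{lem:covering_join} to the cover $y\lessdot_{\gamma}z$ to get $z=s_{1}\vee y$ and hence $x_{j}\vee y=x_{j}\vee z$ (using $s_{1}\leq_{\gamma}x_{j}$), then rule out $x_{j}\wedge y=x_{j}\wedge z$ because that meet would lie above $s_{1}$ while $y$ does not. Your explicit justification that the $\gamma$-sorting word of $w$ begins with $s_{1}$, and your separate treatment of $j=0$, only make explicit what the paper handles in the setup preceding the lemmas (where \eqref{eq:target} is restricted to $j\in[k]$).
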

\begin{proof}
	If $s_{1}\not\leq_{\gamma}y$ but $s_{1}\leq_{\gamma}z$, then Lemma~\ref{lem:covering_join} implies $z=s_{1}\vee y$, which in turn implies $x_{j}\vee z=x_{j}\vee (s_{1}\vee y)=x_{j}\vee y$.  Now suppose that $x_{j}\wedge y=x_{j}\wedge z$. Since $s_{1}\leq_{\gamma}x_{j}$ and $s_{1}\leq_{\gamma}z$ it follows that $s_{1}\leq_{\gamma}x_{j}\wedge z=x_{j}\wedge y\leq_{\gamma}y$, which is a contradiction.  Hence \eqref{eq:target} is satisfied.
\end{proof}

\begin{lemma}\label{lem:clm_3}
	If $s_{1}\leq_{\gamma}w$, but $s_{1}\not\leq_{\gamma}z$, then \eqref{eq:target} is satisfied.
\end{lemma}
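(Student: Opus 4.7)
The plan is to follow the structural template of Lemmas~\ref{lem:clm_1} and~\ref{lem:clm_2}, splitting the argument according to the position of $u:=x_j\wedge z$ relative to $y$. The case $j=0$ is immediate, since $x_0=\varepsilon$ gives $x_0\wedge y=x_0\wedge z=\varepsilon$ and $x_0\vee y=y\neq z=x_0\vee z$, so fix $j\geq 1$. Because $s_1\leq_\gamma w$ and $s_1$ is initial in $\gamma$, the $\gamma$-sorting word of $w$ begins with $s_1$, hence $s_1\leq_\gamma x_j$. The element $u$ satisfies $s_1\not\leq_\gamma u$ (since $u\leq_\gamma z$ and $s_1\not\leq_\gamma z$), and Theorem~\ref{thm:sortable_meets_joins} together with Proposition~\ref{prop:sortable_recursion} show that $u$ is an $s_1\gamma$-sortable element of $W_{\langle s_1\rangle}$.

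I would then split on whether $u\leq_\gamma y$. If $u\not\leq_\gamma y$, then $u\vee y$ satisfies $y<_\gamma u\vee y\leq_\gamma z$, and the cover $y\lessdot_\gamma z$ forces $u\vee y=z$; hence $z\leq_\gamma x_j\vee y$ and the join equality in~\eqref{eq:target} holds, while the meet equality fails because $x_j\wedge z=u\not\leq_\gamma y\geq_\gamma x_j\wedge y$. If instead $u\leq_\gamma y$, the meet equality is automatic and I must rule out the join equality. Observe first that $z\not\leq_\gamma x_j$, else $u=z$ would contradict $u\leq_\gamma y<_\gamma z$; and in the sub-case $y\leq_\gamma x_j$ one has $u=y$ and $x_j\vee y=x_j$, so the join equality would give $z\leq_\gamma x_j$, a contradiction.

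It remains to treat the delicate sub-case $u\leq_\gamma y\not\leq_\gamma x_j$. Here my plan is to assume for contradiction that $x_j\vee y=x_j\vee z=:v$ and bring in the cover $s_1\vee y\lessdot_\gamma s_1\vee z$ in $[s_1,w]_\gamma$, which follows from two applications of Lemma~\ref{lem:covering_join} together with the short observation that any $\gamma$-sortable element strictly between $s_1\vee y$ and $s_1\vee z$ would be $\leq_\gamma s_1\vee z$ yet $\not\leq_\gamma z$ (since otherwise $s_1\leq_\gamma z$) and hence would equal $s_1\vee z$. Since $s_1\leq_\gamma x_j$ yields $x_j\vee(s_1\vee y)=x_j\vee y=v=x_j\vee z=x_j\vee(s_1\vee z)$, Lemma~\ref{lem:clm_1} applied to this upstairs cover forces the strict meet inequality $x_j\wedge(s_1\vee y)<x_j\wedge(s_1\vee z)$. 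To leverage this into a contradiction with $x_j\wedge y=x_j\wedge z$, I would transfer the analysis to the rank-$(n-1)$ Cambrian semilattice $\mathcal{C}_{s_1\gamma}$ of $W_{\langle s_1\rangle}$: let $\tilde w$ be the largest $s_1\gamma$-sortable element of $W_{\langle s_1\rangle}$ with $\tilde w\leq_\gamma w$ (well-defined because joins of elements of a parabolic subgroup that are bounded above remain inside it) and set $\bar x_j:=x_j\wedge\tilde w$. Since $y,z\leq_\gamma\tilde w$ we get $x_j\wedge y=\bar x_j\wedge y$ and $x_j\wedge z=\bar x_j\wedge z$, and the induction hypothesis on rank applied to the left-modular sorting chain through $\bar x_j$ in $[\varepsilon,\tilde w]_{s_1\gamma}$, combined with meet-semidistributivity (Proposition~\ref{prop:cambrian_semidistributive}), should produce the contradiction.

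The main obstacle I anticipate is precisely this last reduction. Figure~\ref{fig:semi_not_trim} exhibits a semidistributive lattice for which both equalities of~\eqref{eq:target} hold at some cover, so the proof cannot rest on semidistributivity alone and must genuinely exploit the fact that $x_j$ sits on a $\gamma$-sorting chain. The technical crux is matching $\bar x_j$ with a node of the sorting chain of $\tilde w$---which, judging from Figure~\ref{fig:cambrian_c3_interval}, is more subtle than a naive erasure of $s_1$'s from the sorting word of $w$. I expect the cleanest presentation to bypass an explicit projection and deduce the contradiction directly from Lemma~\ref{lem:clm_1} together with a careful use of the canonical join representations afforded by semidistributivity.
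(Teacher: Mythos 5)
Your case split on $u:=x_{j}\wedge z$ is sound and its easy branches are handled correctly: when $u\not\leq_{\gamma}y$ the cover $y\lessdot_{\gamma}z$ forces $u\vee y=z$, giving the join equality and refuting the meet equality, and the sub-cases $j=0$ and $y\leq_{\gamma}x_{j}$ are fine. The genuine gap is the remaining sub-case ($u\leq_{\gamma}y\not\leq_{\gamma}x_{j}$, assuming $x_{j}\vee y=x_{j}\vee z$ for contradiction), which is exactly where the content of the lemma lives, and your proposal does not close it. Two concrete problems. First, your justification of the cover $s_{1}\vee y\lessdot_{\gamma}s_{1}\vee z$ is broken: an element $t$ with $s_{1}\vee y<_{\gamma}t<_{\gamma}s_{1}\vee z$ and $t\not\leq_{\gamma}z$ need not satisfy $t\geq_{\gamma}z$, so you cannot conclude $t=s_{1}\vee z$. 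This is avoidable, since you do not need a cover: Lemma~\ref{lem:clm_1} together with Theorem~\ref{thm:left_modularity} makes $x_{j}$ left-modular in all of $[s_{1},w]_{\gamma}$, and condition (ii) of that theorem applies to the mere inequality $s_{1}\vee y<_{\gamma}s_{1}\vee z$. Second, and fatally, the final reduction to $W_{\langle s_{1}\rangle}$ via $\tilde{w}$ and $\bar{x}_{j}=x_{j}\wedge\tilde{w}$ is only a plan: as you yourself observe, $\bar{x}_{j}$ has no reason to be a prefix of the $s_{1}\gamma$-sorting word of $\tilde{w}$, so the rank induction hypothesis (which is about the specific chain \eqref{eq:chain}) does not apply to it, and no contradiction is actually derived. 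Ending with ``I expect the cleanest presentation to bypass an explicit projection'' is an admission that the argument is incomplete.

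For comparison, the paper closes this case without any projection, staying inside $[\varepsilon,w]_{\gamma}$. Set $y'=s_{1}\vee y$ and $z'=s_{1}\vee z$, so $y'<_{\gamma}z'$ and, since $s_{1}\leq_{\gamma}x_{j}$, one has $x_{j}\vee y'=x_{j}\vee y$ and $x_{j}\vee z'=x_{j}\vee z$. Left-modularity of $x_{j}$ in $[s_{1},w]_{\gamma}$ gives the identity $(y'\vee x_{j})\wedge z'=y'\vee(x_{j}\wedge z')$. If both equalities of \eqref{eq:target} held, then $x_{j}\vee y'=x_{j}\vee z'\geq_{\gamma}z'$, so by absorption $y'\vee(x_{j}\wedge z')=(z'\vee x_{j})\wedge z'=z'$, while also $y'\vee z=z'$; yet
\begin{displaymath}
	y'\vee\bigl((x_{j}\wedge z')\wedge z\bigr)=y'\vee(x_{j}\wedge z)=y'\vee(x_{j}\wedge y)\leq_{\gamma}y'<_{\gamma}z',
\end{displaymath}
contradicting the join-semidistributivity of $[\varepsilon,w]_{\gamma}$ (Proposition~\ref{prop:cambrian_semidistributive}). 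Your worry that ``the proof cannot rest on semidistributivity alone'' is answered precisely by the left-modular identity supplied by Lemma~\ref{lem:clm_1}: it is the combination of that identity with join-semidistributivity, applied to the triple $y'$, $x_{j}\wedge z'$, $z$, that yields the contradiction, not any canonical join representation or parabolic projection.
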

\begin{proof}
	If $s_{1}\not\leq_{\gamma}z$, then define $y'=s_{1}\vee y$ and $z'=s_{1}\vee z$.  Since $z'$ is an upper bound for both $s_{1}$ and $y$, it follows that $y'=s_{1}\vee y\leq_{\gamma}z'$.  However, equality cannot hold since by Lemma~\ref{lem:covering_join} we have $y\lessdot_{\gamma}z\lessdot_{\gamma}z'$ and $y\lessdot_{\gamma}y'\leq_{\gamma}z'$.  We therefore have $y'<_{\gamma}z'$.  Figure~\ref{fig:illustration} illustrates this situation.  In this figure, straight lines indicate cover relations, and curved lines indicate order relations.  The chain \eqref{eq:chain} is indicated with thick edges, and the interval $[s_{1},w]_{\gamma}$ is indicated by the shaded region.
	
	\begin{figure}
		\centering
		\begin{tikzpicture}\small
			\def\x{1};
			\def\y{1};
			\draw(3*\x,1*\y) node(e){$\varepsilon$};
			\draw(2*\x,2*\y) node(s1){$s_{1}$};
			\draw(1*\x,4.5*\y) node(x){$x_{j}$};
			\draw(4.75*\x,3*\y) node(y){$y$};
			\draw(5.5*\x,4*\y) node(z){$z$};
			\draw(3.75*\x,4*\y) node(yy){$y'$};
			\draw(4.5*\x,5*\y) node(zz){$z'$};
			\draw(3*\x,7*\y) node(w){$w$};
			\draw[thick](e) -- (s1);
			\draw(y) -- (z);
			\draw(y) -- (yy);
			\draw(z) -- (zz);
			\draw(e) .. controls (4*\x,1.5*\y) and (3.75*\x,2.5*\y) .. (y);
			\draw(s1) .. controls (3*\x,2.5*\y) and (2.75*\x,3.5*\y) .. (yy);
			\draw(yy) .. controls (4.25*\x,4.5*\y) and (4*\x,4.5*\y) .. (zz);
			\draw[thick](s1) .. controls (2.25*\x,2.75*\y) and (.75*\x,3.75*\y) .. (x);
			\draw[thick](x) .. controls (2.25*\x,5.25*\y) and (1.75*\x,6.25*\y) .. (w);
			\draw(zz) .. controls (4.75*\x,5.75*\y) and (2.75*\x,6.25*\y) .. (w);
			\begin{pgfonlayer}{background}
				\fill[gray!30!white,rounded corners](1.7*\x,2*\y) -- (2*\x,1.7*\y) -- (2.3*\x,2*\y) -- (5.3*\x,5*\y) -- (3.3*\x,7*\y) -- (3*\x,7.3*\y) -- (2.7*\x,7*\y) -- (-.3*\x,4*\y) -- cycle;
			\end{pgfonlayer}
		\end{tikzpicture}
		\caption{The setup in the proof of Lemma~\ref{lem:clm_3}.}
		\label{fig:illustration}
	\end{figure}
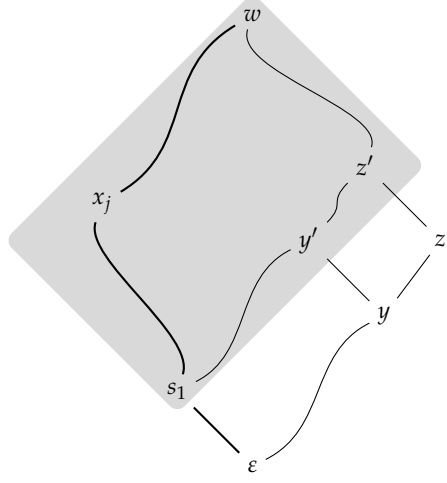
	
	Lemma~\ref{lem:clm_1} implies that $x_{j}$ is left-modular in the interval $[s_{1},w]_{\gamma}$, and since $s_{1}\leq_{\gamma}y'<_{\gamma}z'$ we thus have
	\begin{align}\label{eq:induction_modular}
		(y'\vee x_{j})\wedge z' = y'\vee(x_{j}\wedge z').
	\end{align}	
	Moreover, we have 
	\begin{align}\label{eq:relation}
		x_{j}\vee y' = x_{j}\vee (s_{1}\vee y) = x_{j}\vee y\quad\text{and}\quad x_{j}\vee z' = x_{j}\vee (s_{1}\vee z) = x_{j}\vee z.
	\end{align}	
	
	(i) First suppose that $x_{j}\vee y=x_{j}\vee z$ and $x_{j}\wedge y=x_{j}\wedge z$.  We conclude from \eqref{eq:relation} that $x_{j}\vee y' = x_{j}\vee y = x_{j}\vee z = x_{j}\vee z'$.  Then, the absorption laws in lattices together with \eqref{eq:induction_modular} imply $x_{j}\wedge y'<_{\gamma}x_{j}\wedge z'$.  (Indeed, we clearly have $x_{j}\wedge y'\leq_{\gamma}x_{j}\wedge z'$, and in case of equality we obtain
	\begin{displaymath}
		z' = (z'\vee x_{j})\wedge z' = (y'\vee x_{j})\wedge z' = y'\vee(x_{j}\wedge z') = y'\vee (x_{j}\wedge y') = y' <_{\gamma} z',
	\end{displaymath}
	which is a contradiction.)  We conclude that
	\begin{align*}
		y'\vee (x_{j}\wedge z') & = (y'\vee x_{j})\wedge z' = (z'\vee x_{j})\wedge z' = z',\quad\text{and}\\
		y'\vee z & = s_{1}\vee y\vee z = y\vee z' = z',
	\end{align*}
	but
	\begin{displaymath}
		y'\vee (x_{j}\wedge z'\wedge z) = y'\vee (x_{j}\wedge z) = y'\vee (x_{j}\wedge y) \leq_{\gamma} y'\vee(x_{j}\wedge y') = y' <_{\gamma} z',
	\end{displaymath}
	which contradicts Proposition~\ref{prop:cambrian_semidistributive} stating that $[\varepsilon,w]_{\gamma}$ is semidistributive.  It follows that we cannot have both $x_{j}\vee y=x_{j}\vee z$ and $x_{j}\wedge y=x_{j}\wedge z$ at the same time.
	
	(ii) Now suppose that $x_{j}\vee y<_{\gamma}x_{j}\vee z$ and $x_{j}\wedge y<_{\gamma}x_{j}\wedge z$.  We conclude from \eqref{eq:relation} that $x_{j}\vee y' = x_{j}\vee y <_{\gamma} x_{j}\vee z = x_{j}\vee z'$.  This implies in particular that $z\not\leq_{\gamma}y'\vee x_{j}$.  (Otherwise, we would have $x_{j}\vee z\leq_{\gamma}x_{j}\vee y'<_{\gamma}x_{j}\vee z$, which is a contradiction.)  Since $y\lessdot_{\gamma}z$ and $y\lessdot_{\gamma}y'\leq_{\gamma}y'\vee x_{j}$, we therefore obtain
	\begin{displaymath}
		y\leq_{\gamma}(y'\vee x_{j})\wedge z<_{\gamma}z,
	\end{displaymath}
	which in view of $y\lessdot_{\gamma}z$ implies $y=(y'\vee x_{j})\wedge z$.  Now, since $x_{j}\leq_{\gamma}y'\vee x_{j}$ we have in particular 
	\begin{displaymath}
		x_{j}\wedge z\leq_{\gamma}(y'\vee x_{j})\wedge z = y,
	\end{displaymath}	
	which yields the contradiction $x_{j}\wedge z\leq_{\gamma}x_{j}\wedge y<_{\gamma}x_{j}\wedge z$.

	We have thus shown that \eqref{eq:target} is satisfied.
\end{proof}

\begin{lemma}\label{lem:clm_4}
	If $s_{1}\not\leq_{\gamma}w$, then \eqref{eq:target} is satisfied.
\end{lemma}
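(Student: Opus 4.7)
My plan is to reduce the statement to a lower-rank parabolic subgroup and invoke the inductive hypothesis on rank. Since $s_1\not\leq_\gamma w$ and $w$ is $\gamma$-sortable, Proposition~\ref{prop:sortable_recursion}(ii) tells us that $w$ lies in $W_{\langle s_1\rangle}$ and is $(s_1\gamma)$-sortable. The same argument applies to every element $u\in[\varepsilon,w]_\gamma$: transitivity of $\leq_S$ forces $s_1\not\leq_\gamma u$, and Proposition~\ref{prop:sortable_recursion}(ii) then places $u$ inside $W_{\langle s_1\rangle}$ as an $(s_1\gamma)$-sortable element. In particular this applies to $y$, $z$, and to every $x_j$ on the chain $\cc$ (each $x_j$ is $\gamma$-sortable because any prefix of a $\gamma$-sorting word is itself a $\gamma$-sorting word).

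Next I would verify that the interval $[\varepsilon,w]_\gamma$ in $\camb$ coincides, as a lattice, with the interval $[\varepsilon,w]_{s_1\gamma}$ in $\mathcal{C}_{s_1\gamma}$: their underlying sets agree by the preceding paragraph, and the weak order on $W$ restricts to the weak order on $W_{\langle s_1\rangle}$, so joins and meets agree throughout the interval. Moreover, the $\gamma$-sorting word of $w$ contains no occurrence of $s_1$ and is therefore also the $(s_1\gamma)$-sorting word of $w$ inside $W_{\langle s_1\rangle}$; consequently $\cc$ is the sorting-word chain of $w$ with respect to $s_1\gamma$. Since $W_{\langle s_1\rangle}$ has rank $n-1<n$, the induction hypothesis on rank applies and asserts that $\cc$ is a left-modular chain in $[\varepsilon,w]_{s_1\gamma}=[\varepsilon,w]_\gamma$; in particular each $x_j$ is left-modular there. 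Applying the criterion of Theorem~\ref{thm:left_modularity}(iii) to the cover relation $y\lessdot_\gamma z$ then yields \eqref{eq:target}.

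The main obstacle is really only bookkeeping: one must confirm that sortability, the weak order, joins, meets, and the sorting word all transfer faithfully from $(W,\gamma)$ to $(W_{\langle s_1\rangle},s_1\gamma)$. Once this identification is in place, no further argument is needed; in contrast to Lemmas~\ref{lem:clm_1}--\ref{lem:clm_3}, this case needs neither the covering-join lemma nor semidistributivity, essentially because the hypothesis $s_1\not\leq_\gamma w$ confines the entire interval to a smaller-rank Coxeter group, collapsing the argument to a clean rank-induction step.
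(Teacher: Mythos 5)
Your proof is correct and follows the same route as the paper: reduce via Proposition~\ref{prop:sortable_recursion}(ii) to the interval $[\varepsilon,w]_{s_{1}\gamma}$ in the Cambrian semilattice of the rank-$(n-1)$ parabolic subgroup $W_{\langle s_{1}\rangle}$ and invoke the induction hypothesis on rank. The paper states this in one sentence; your additional bookkeeping (that every element of the interval avoids $s_{1}$, that joins, meets, and the sorting-word chain transfer) is exactly the content implicitly packed into that sentence.
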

\begin{proof}
	Proposition~\ref{prop:sortable_recursion} implies $[\varepsilon,w]_{\gamma}\cong[\varepsilon,w]_{s_{1}\gamma}$, and the result follows by induction on rank, since we can view $[\varepsilon,w]_{s_{1}\gamma}$ as an interval in the $s_{1}\gamma$-Cambrian semilattice of the standard parabolic subgroup $W_{\langle s_{1}\rangle}$ of rank $n-1$.  
\end{proof}

In the lattice in Figure~\ref{fig:cambrian_c3_interval}, the chain \eqref{eq:chain} is indicated by thick edges, and the left-modularity of its elements can be verified by hand.

Next we want to show that every closed interval in $\camb$ is extremal.  To reduce the amount of work, we recall more helpful results.  First we observe that in a semidistributive lattice the number of join-irreducible elements always equals the number of meet-irreducible elements.

\begin{lemma}[\cite{day79characterizations}*{Lemma~3.1}]\label{lem:semidistributive_same_irreducibles}
	If $\LL$ is semidistributive, then $\bigl\lvert\mathcal{J}(\LL)\bigr\rvert=\bigl\lvert\mathcal{M}(\LL)\bigr\rvert$.
\end{lemma}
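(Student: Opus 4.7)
The plan is to construct mutually inverse maps $\kappa:\mathcal{J}(\LL)\to\mathcal{M}(\LL)$ and $\lambda:\mathcal{M}(\LL)\to\mathcal{J}(\LL)$ built from the canonical representations afforded by semidistributivity. Given $j\in\mathcal{J}(\LL)$ with unique lower cover $j_{*}$, I would introduce
\[
	S_{j} = \bigl\{x\in L\mid j_{*}\leq x\text{ and }j\not\leq x\bigr\},
\]
which is nonempty since $j_{*}\in S_{j}$. Because $j$ is join-irreducible, any element strictly below $j$ lies below $j_{*}$; hence for each $x\in S_{j}$ one has $x\wedge j = j_{*}$.

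The first key step is to show that $S_{j}$ is closed under binary joins, so that it has a maximum in the finite lattice $\LL$. For $x,y\in S_{j}$ the identity $x\wedge j = y\wedge j = j_{*}$ together with meet-semidistributivity gives $(x\vee y)\wedge j = j_{*} < j$, while $x\vee y\geq j_{*}$ is automatic, so $x\vee y\in S_{j}$. I would define $\kappa(j)$ to be the maximum of $S_{j}$. Next I would verify $\kappa(j)\in\mathcal{M}(\LL)$: any upper cover $y$ of $\kappa(j)$ satisfies $y\notin S_{j}$ by maximality and $y\geq j_{*}$, hence $y\geq j$; if $\kappa(j)$ had two distinct upper covers $a,b$ then $a\wedge b=\kappa(j)$ would contain $j$, contradicting $j\not\leq\kappa(j)$. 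Thus $\kappa(j)$ has a unique upper cover.

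The third step is to define $\lambda:\mathcal{M}(\LL)\to\mathcal{J}(\LL)$ dually, using join-semidistributivity in place of its meet analogue: for $m\in\mathcal{M}(\LL)$ with unique upper cover $m^{*}$, set $\lambda(m)$ to be the minimum of $\{x\in L\mid x\leq m^{*}\text{ and }x\not\leq m\}$. One then verifies $\lambda\circ\kappa = \mathrm{id}$ and $\kappa\circ\lambda=\mathrm{id}$. For instance $\lambda(\kappa(j))=j$ is checked by showing that $j$ belongs to the dual set attached to $\kappa(j)$ (since $j\leq\kappa(j)^{*}$ from the preceding step while $j\not\leq\kappa(j)$), and that no strictly smaller element of that set exists because such an element would lie between $j_{*}$ and $j$, contradicting join-irreducibility. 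The main obstacle is the symmetric bookkeeping in this last verification, where both halves of semidistributivity must be invoked in tandem; once the inverse property is established, bijectivity immediately yields $\lvert\mathcal{J}(\LL)\rvert = \lvert\mathcal{M}(\LL)\rvert$.
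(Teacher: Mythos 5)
The paper does not prove this lemma at all: it is quoted verbatim from A.~Day's work (\cite{day79characterizations}*{Lemma~3.1}) and used as a black box, so there is nothing internal to compare against. Your argument is a correct, self-contained proof along the standard lines: it is exactly the classical $\kappa$-bijection for finite semidistributive lattices (cf.\ also Freese--Je\v{z}ek--Nation). Each half of the construction is right: meet-semidistributivity in the form $j\wedge x=j\wedge y\Rightarrow j\wedge x=j\wedge(x\vee y)$ shows $S_{j}$ is join-closed, hence has a maximum $\kappa(j)$ in a finite lattice; every upper cover of $\kappa(j)$ lies above $j$, and two distinct upper covers would meet to $\kappa(j)\geq j$, so $\kappa(j)$ is meet-irreducible; the dual map $\lambda$ uses join-semidistributivity. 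Note that finiteness of $\LL$ is essential (you use it to extract maxima and minima); it is implicit in the paper's setting, where the lemma is only applied to closed intervals $[\varepsilon,w]_{\gamma}$, but you should state it.

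One sentence deserves tightening: in verifying $\lambda(\kappa(j))=j$ you say a smaller element of the set attached to $\kappa(j)$ ``would lie between $j_{*}$ and $j$, contradicting join-irreducibility.'' The actual contradiction is different: if $x$ is the minimum of $\{u\leq\kappa(j)^{*}\mid u\not\leq\kappa(j)\}$ then $x\leq j$ since $j$ belongs to that set; if $x<j$ then join-irreducibility forces $x\leq j_{*}$, and since $j_{*}\leq\kappa(j)$ (as $\kappa(j)\in S_{j}$) this gives $x\leq\kappa(j)$, contradicting membership in the set --- not join-irreducibility itself. All the ingredients for this are already in your write-up, so this is a matter of precision rather than a gap. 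Finally, observe that $\lambda\circ\kappa=\mathrm{id}$ alone only gives $\lvert\mathcal{J}(\LL)\rvert\leq\lvert\mathcal{M}(\LL)\rvert$; you do need the dual identity (or at least the existence of the injection $\lambda$) for the reverse inequality, as you indicate.
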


The next result states that the set of elements in a Coxeter group that do not lie above some simple generator is closed under joins whenever they exist.

\begin{lemma}[\cite{reading11sortable}*{Corollary~2.21}]\label{lem:join_parabolic}
	Let $(W,S)$ be a Coxeter system, let $u,v\in W$ and $s\in S$.  If $s\not\leq_{S}u$ and $s\not\leq_{S}v$ as well as $u\vee v$ exists, then $s\not\leq_{S}u\vee v$.  
\end{lemma}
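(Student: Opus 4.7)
The plan is to reduce the claim to meet-semidistributivity of the weak order semilattice $\WW$. First I would observe that each simple generator $s\in S$ is an atom of $\WW$ since $\ell_{S}(s)=1$ and therefore $[\varepsilon,s]_{S}=\{\varepsilon,s\}$. Consequently, for any $w\in W$ we have $s\wedge w=s$ if $s\leq_{S}w$ and $s\wedge w=\varepsilon$ otherwise, so the two hypotheses $s\not\leq_{S}u$ and $s\not\leq_{S}v$ can be rewritten as $s\wedge u=s\wedge v=\varepsilon$. Assuming meet-semidistributivity of $\WW$, I would then conclude $s\wedge(u\vee v)=s\wedge u=\varepsilon$, i.e., $s\not\leq_{S}u\vee v$, which is the desired statement.

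The main obstacle is justifying meet-semidistributivity of $\WW$ itself in the required generality: the paper only records semidistributivity of $\camb$ (Proposition~\ref{prop:cambrian_semidistributive}), whereas the present claim needs it for the full weak order semilattice with $u,v$ ranging over all of $W$. For finite Coxeter groups this is a classical theorem of Le~Conte~de~Poly-Barbut; in general, one can extract it from the canonical-join-representation theorem \cite{reading11sortable}*{Theorem~8.1} via the dual form of the correspondence in \cite{freese95free}*{Chapter~II.5}, applied to a finite interval of $\WW$ containing $s$, $u$, $v$, and $u\vee v$ (which exists by finitariness of $\WW$).

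A more self-contained alternative would proceed by induction on $\ell_{S}(u)+\ell_{S}(v)$. If $u$ and $v$ share a common left descent $t$, which necessarily satisfies $t\neq s$ by hypothesis, then \cite{reading11sortable}*{Proposition~2.18} identifies $[t,u\vee v]_{S}$ with $[\varepsilon,t(u\vee v)]_{S}$, yielding $tu\vee tv=t(u\vee v)$; since $s\not\leq_{S}tu$ and $s\not\leq_{S}tv$, the induction hypothesis applied inside $[\varepsilon,t(u\vee v)]_{S}$ gives $s\not\leq_{S}t(u\vee v)$ and one deduces $s\not\leq_{S}u\vee v$. The genuinely hard case is when $u$ and $v$ share no common left descent at all: here one seems forced either to feed back the meet-semidistributivity input above or to pass to the geometric representation and exploit extremality of the simple root $\alpha_{s}$ in $\Phi^{+}$, arguing that $\alpha_{s}\in\mathrm{inv}(u\vee v)$ forces $\alpha_{s}\in\mathrm{inv}(u)\cup\mathrm{inv}(v)$ via the biclosed description of inversion sets (a description which itself becomes delicate for infinite $W$).
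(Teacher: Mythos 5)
The paper does not prove this lemma at all: it is imported verbatim from \cite{reading11sortable}*{Corollary~2.21} and used as a black box, so there is no internal argument to compare yours against; your proposal has to stand on its own. Your opening reduction is correct and clean: since $s$ is an atom of $\WW$, the hypotheses say $s\wedge u=s\wedge v=\varepsilon$, and meet-semidistributivity would immediately give $s\wedge(u\vee v)=\varepsilon$. (One point to make explicit: $\WW$ is only a meet-semilattice, so the law must be applied inside a finite lattice containing $s$, $u$, $v$ and $u\vee v$; this is harmless, because in the case to be excluded, namely $s\leq_{S}u\vee v$, all four elements lie in $[\varepsilon,u\vee v]$.)

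The genuine gap is that none of your three routes actually delivers the one nontrivial input. In the first route, the justification you offer for meet-semidistributivity is flawed: \cite{reading11sortable}*{Theorem~8.1} produces canonical \emph{join} representations, which by \cite{freese95free}*{Theorem~2.24} yield \emph{join}-semidistributivity of the intervals $[\varepsilon,z]$; the ``dual form of the correspondence'' would require canonical \emph{meet} representations, which Theorem~8.1 does not provide, and $\WW$ has no top element and is not self-dual, so you cannot simply dualize (the finite case is rescued by $w\mapsto ww_{o}$, but the infinite case is the whole point of the paper). The gap is repairable --- the map $x\mapsto w^{-1}x$ is an anti-isomorphism from $[\varepsilon,w]$ onto $[\varepsilon,w^{-1}]$ in the right weak order, so join-semidistributivity of all such intervals transfers to meet-semidistributivity of all such intervals --- but you did not supply this step, and it is the load-bearing one. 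There is also a circularity worry: in \cite{reading11sortable}, Corollary~2.21 belongs to Section~2 and feeds into the later development culminating in Theorem~8.1, so grounding the former on the latter reverses the logical order of the source. Your second route explicitly leaves its hard case (no common left descent) open, and your third route --- $\alpha_{s}$ is an extreme ray of the positive root cone, hence cannot enter the closure of $\mathrm{inv}(u)\cup\mathrm{inv}(v)$ --- is in substance exactly how Reading and Speyer argue, but as written it is a pointer to a proof rather than a proof. In short: right reduction, but in each variant the decisive step is either missing or mis-justified.
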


Since joins (whenever they exist) and meets in $\camb$ agree with those in the weak order lattice on $W$, a statement analogous to Lemma~\ref{lem:join_parabolic} holds for $\camb$.  Before we enumerate the meet-irreducible elements in some interval $[\varepsilon,w]_{\gamma}$, we prove another technical lemma.

\begin{lemma}\label{lem:meet_irreducibles}
	Let $w\in C_{\gamma}$ with $s_{1}\leq_{\gamma}w$, and let $u,v\leq_{\gamma}w$ such that $s_{1}\not\leq_{\gamma}u$ and $s_{1}\not\leq_{\gamma}v$.  If $v$ is meet-irreducible, then $u\leq_{\gamma}v$.  
\end{lemma}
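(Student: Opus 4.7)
My plan is to exploit the unique upper cover guaranteed by meet-irreducibility, and to derive a contradiction from the parabolic-closure Lemma~\ref{lem:join_parabolic}.

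First, I would consider $u \vee v$. Since both $u$ and $v$ are bounded above by $w$ inside $\camb$, the join $u \vee v$ exists, and since joins in $\camb$ agree with joins in the weak order lattice of $W$ when they exist, Lemma~\ref{lem:join_parabolic} (applied with $s=s_{1}$) gives $s_{1}\not\leq_{\gamma} u\vee v$. The goal is to conclude $u\vee v=v$, which is equivalent to $u\leq_{\gamma}v$.

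Next, suppose for contradiction that $u\vee v>_{\gamma}v$. Because $v$ is meet-irreducible in $[\varepsilon,w]_{\gamma}$, it has a unique upper cover $v^{*}$ in this interval, and every element of $[\varepsilon,w]_{\gamma}$ strictly above $v$ must lie above $v^{*}$. On one hand, the hypotheses $s_{1}\leq_{\gamma}w$ and $v\leq_{\gamma}w$ ensure that the join $s_{1}\vee v$ exists inside $[\varepsilon,w]_{\gamma}$, and it strictly exceeds $v$ because $s_{1}\not\leq_{\gamma}v$. Hence $v^{*}\leq_{\gamma}s_{1}\vee v$, which forces $s_{1}\leq_{\gamma}v^{*}$. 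On the other hand, $u\vee v$ also lies strictly above $v$, so $v^{*}\leq_{\gamma}u\vee v$, and consequently $s_{1}\leq_{\gamma}u\vee v$. This contradicts the conclusion $s_{1}\not\leq_{\gamma}u\vee v$ derived from Lemma~\ref{lem:join_parabolic}. Thus $u\vee v=v$, which means $u\leq_{\gamma}v$, as desired.

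The argument is short and does not really have a hard step; the only subtlety is making sure the meet-irreducibility of $v$ is used relative to the closed interval $[\varepsilon,w]_{\gamma}$ (so that "unique upper cover" makes sense) and that Lemma~\ref{lem:join_parabolic}, phrased for the weak order on $W$, transfers to $\camb$ — which is legitimate because joins in $\camb$, when they exist, coincide with joins in $\WW$, as recorded just before the statement of the lemma.
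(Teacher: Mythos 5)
Your overall strategy is the same as the paper's: use Lemma~\ref{lem:join_parabolic} to get $s_{1}\not\leq_{\gamma}u\vee v$, and then play this off against the unique upper cover of the meet-irreducible element $v$. However, there is one genuine gap. You assert that $v^{*}\leq_{\gamma}s_{1}\vee v$ ``forces'' $s_{1}\leq_{\gamma}v^{*}$, but this inference is not valid: in an arbitrary lattice the unique upper cover $v^{*}$ of $v$ could sit strictly between $v$ and $s_{1}\vee v$ without lying above $s_{1}$ (the minimality of the join $s_{1}\vee v$ says nothing about elements below it that fail to dominate $s_{1}$). Since $s_{1}\leq_{\gamma}v^{*}$ is exactly the fact you need to propagate $s_{1}$ up to $u\vee v$ and reach the contradiction, the argument does not close as written.

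The missing ingredient is Lemma~\ref{lem:covering_join}: because $s_{1}\not\leq_{\gamma}v$ and $s_{1}\leq_{\gamma}w$, that lemma guarantees that $s_{1}\vee v$ actually \emph{covers} $v$, i.e.\ $v\lessdot_{\gamma}s_{1}\vee v$. Combined with the uniqueness of the upper cover of the meet-irreducible $v$, this identifies $v^{*}=s_{1}\vee v$, and only then does $s_{1}\leq_{\gamma}v^{*}\leq_{\gamma}u\vee v$ follow, contradicting $s_{1}\not\leq_{\gamma}u\vee v$. This is precisely how the paper's proof proceeds (it phrases the same contradiction as $s_{1}\vee v\not\leq_{\gamma}u\vee v$ versus $u\vee v\geq_{\gamma}v^{*}=s_{1}\vee v$). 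Once you cite Lemma~\ref{lem:covering_join} at that step, your proof is complete and essentially identical to the paper's.
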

\begin{proof}
	Since $s_{1}\not\leq_{\gamma}u$ and $s_{1}\not\leq_{\gamma}v$, Lemma~\ref{lem:join_parabolic} implies $s_{1}\not\leq_{\gamma}u\vee v$.  It follows that $s_{1}\vee v\not\leq_{\gamma}u\vee v$.  Lemma~\ref{lem:covering_join} implies that $v\lessdot_{\gamma}s_{1}\vee v$, and we trivially have $v\leq_{\gamma}u\vee v$.  If $v$ is meet-irreducible, then $v$ has exactly one upper cover, which forces $u\vee v=v$, and therefore $u\leq_{\gamma}v$.  
\end{proof}

\begin{proposition}\label{prop:cambrian_extremal}
	For $w\in C_{\gamma}$ we have $\bigl\lvert\mathcal{M}\bigl([\varepsilon,w]_{\gamma}\bigr)\bigr\rvert=\ell_{S}(w)$.
\end{proposition}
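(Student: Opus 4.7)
The plan is to proceed by induction on the rank $n$ of $W$ and on the length $k=\ell_{S}(w)$, mirroring the case analysis in Lemmas~\ref{lem:clm_1}--\ref{lem:clm_4}; the base cases are trivial.  Let $s_{1}\in S$ be initial in $\gamma$.  If $s_{1}\not\leq_{\gamma}w$, then $[\varepsilon,w]_{\gamma}=[\varepsilon,w]_{s_{1}\gamma}$ by Proposition~\ref{prop:sortable_recursion}(ii), viewed as an interval in the $s_{1}\gamma$-Cambrian semilattice of the standard parabolic subgroup $W_{\langle s_{1}\rangle}$ of rank $n-1$, so induction on rank applies immediately.  Thus assume $s_{1}\leq_{\gamma}w$, in which case $s_{1}w$ is $s_{1}\gamma s_{1}$-sortable of length $k-1$, and the map $x\mapsto s_{1}x$ yields an isomorphism $[s_{1},w]_{\gamma}\cong[\varepsilon,s_{1}w]_{s_{1}\gamma s_{1}}$.

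I would then partition $\mathcal{M}\bigl([\varepsilon,w]_{\gamma}\bigr)$ according to whether $s_{1}\leq_{\gamma}v$ and bound each part.  For $v\in[s_{1},w]_{\gamma}\setminus\{w\}$, every upper cover of $v$ in $[\varepsilon,w]_{\gamma}$ lies above $s_{1}$, hence in $[s_{1},w]_{\gamma}$; thus the meet-irreducibles of $[\varepsilon,w]_{\gamma}$ with $s_{1}\leq_{\gamma}v$ coincide with the meet-irreducibles of $[s_{1},w]_{\gamma}$, and by the above isomorphism together with induction on length there are precisely $k-1$ of them.  For the other part, Lemma~\ref{lem:meet_irreducibles} forces any meet-irreducible $v$ with $s_{1}\not\leq_{\gamma}v$ to dominate every element of $[\varepsilon,w]_{\gamma}$ that does not lie above $s_{1}$, so such a $v$ is uniquely determined (if it exists).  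Summing, $\bigl\lvert\mathcal{M}\bigl([\varepsilon,w]_{\gamma}\bigr)\bigr\rvert\leq k$.

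For the reverse inequality, I rely on the standard fact that any finite lattice of length $\ell$ has at least $\ell$ meet-irreducibles: for each cover $x_{i-1}\lessdot x_{i}$ in a maximal chain some meet-irreducible is above $x_{i-1}$ but not above $x_{i}$, and these are automatically distinct.  Since the chain \eqref{eq:chain} witnesses that $[\varepsilon,w]_{\gamma}$ has length exactly $k$, equality follows.  The main obstacle is verifying cleanly the identification of meet-irreducibles of $[\varepsilon,w]_{\gamma}$ lying above $s_{1}$ with those of the subinterval $[s_{1},w]_{\gamma}$---in particular handling the edge case where $v=s_{1}$ itself is meet-irreducible---but this is resolved by the preservation of upper covers noted above.
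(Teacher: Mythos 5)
Your proof is correct and follows essentially the same route as the paper: induction on rank and length, the case $s_{1}\not\leq_{\gamma}w$ handled by passing to $W_{\langle s_{1}\rangle}$, the isomorphism $[s_{1},w]_{\gamma}\cong[\varepsilon,s_{1}w]_{s_{1}\gamma s_{1}}$ accounting for $k-1$ meet-irreducibles lying above $s_{1}$, and Lemma~\ref{lem:meet_irreducibles} showing there is at most one meet-irreducible not above $s_{1}$. The only divergence is in the last step: the paper exhibits that extra meet-irreducible explicitly as $\bigvee\{u\leq_{\gamma}w\mid s_{1}\not\leq_{\gamma}u\}$ (using Theorem~\ref{thm:sortable_meets_joins} and Lemma~\ref{lem:join_parabolic}), whereas you obtain the matching lower bound from the general fact that a finite lattice of length $\ell$ has at least $\ell$ meet-irreducibles together with $\ell\bigl([\varepsilon,w]_{\gamma}\bigr)=k$; both arguments are sound, and yours has the small advantage of not needing to verify that the constructed maximal element of $C$ is in fact meet-irreducible.
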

\begin{proof}
	We proceed again by induction on rank and length.  If $W$ has rank $2$ or if $\ell_{S}(w)\leq 2$, then the result is trivially true.  Hence let $W$ have rank $n$, let $\ell_{S}(w)=k$, and suppose that the claim is true for all standard parabolic subgroups of $W$ of rank $<n$, and for all $\gamma'$-sortable elements $w'\in W$ for some Coxeter element $\gamma'\in W$ with $\ell_{S}(w')<k$.  We distinguish two cases:
	
	\smallskip	
	
	(i) Assume that $s_{1}\leq_{\gamma}w$.  Proposition~\ref{prop:sortable_recursion} implies that the interval $[s_{1},w]_{\gamma}$ is isomorphic to the interval $[\varepsilon,s_{1}w]_{s_{1}\gamma s_{1}}$, and it follows by induction on length that there are $k-1$ meet-irreducible elements in $[s_{1},w]_{\gamma}$, and it is straightforward to show that $\mathcal{M}\bigl([s_{1},w]_{\gamma}\bigr)\subseteq\mathcal{M}\bigl([\varepsilon,w]_{\gamma}\bigr)$.
	
	Now we show that there is exactly one additional meet-irreducible element in $[\varepsilon,w]_{\gamma}$.  Consider the set $C=\{u\leq_{\gamma}w\mid s_{1}\not\leq_{\gamma}u\}$.  Lemma~\ref{lem:meet_irreducibles} implies that any element in $\mathcal{M}\bigl([\varepsilon,w]_{\gamma}\bigr)\setminus\mathcal{M}\bigl([s_{1},w]_{\gamma}\bigr)$ is maximal in $C$.  Since $w$ is an upper bound for $C$, Theorem~\ref{thm:sortable_meets_joins} implies that $x=\bigvee C$ exists.  Lemma~\ref{lem:join_parabolic} implies that $x\in C$, and it is thus the unique maximal element in $C$.  Hence $\mathcal{M}\bigl([\varepsilon,w]_{\gamma}\bigr)\setminus\mathcal{M}\bigl([s_{1},w]_{\gamma}\bigr)=\{x\}$, and we obtain $\bigl\lvert\mathcal{M}\bigl([\varepsilon,w]_{\gamma}\bigr)\bigr\rvert=k$ as desired.
	
	\smallskip	
	
	(ii) Assume that $s_{1}\not\leq_{\gamma}w$.  Proposition~\ref{prop:sortable_recursion} implies $[\varepsilon,w]_{\gamma}\cong[\varepsilon,w]_{s_{1}\gamma}$, and the result follows by induction on rank, since we can view $[\varepsilon,w]_{s_{1}\gamma}$ as an interval in the $s_{1}\gamma$-Cambrian semilattice of the standard parabolic subgroup $W_{\langle s_{1}\rangle}$ of rank $n-1$. 
\end{proof}

\begin{example}\label{ex:cambrian_3}
	Let us continue Example~\ref{ex:cambrian_2}, in which we in particular listed the meet-irreducible elements in the interval of the $s_{0}s_{1}s_{2}s_{3}$-Cambrian semilattice of $\tilde{C}_{3}$ shown in Figure~\ref{fig:cambrian_c3_interval}.  Among these, only $s_{2}s_{3}s_{2}s_{3}$ does not lie above $s_{0}$.
\end{example}

We conclude this article with the proof of Theorem~\ref{thm:cambrian_trim}. 

\begin{proof}[Proof of Theorem~\ref{thm:cambrian_trim}]
	First, let $w\in C_{\gamma}$ with $\ell_{S}(w)=k$.  Proposition~\ref{prop:cambrian_left_modular} implies that $[\varepsilon,w]_{\gamma}$ is left-modular, and since Proposition~\ref{prop:cambrian_semidistributive} implies that $[\varepsilon,w]_{\gamma}$ is semidistributive, it follows from Lemma~\ref{lem:semidistributive_same_irreducibles} and Proposition~\ref{prop:cambrian_extremal} that
	\begin{displaymath}
		\mathcal{J}\bigl([\varepsilon,w]_{\gamma}\bigr)=\mathcal{M}\bigl([\varepsilon,w]_{\gamma}\bigr)=k.
	\end{displaymath}
	Hence $[\varepsilon,w]_{\gamma}$ is extremal, which by definition implies that $[\varepsilon,w]_{\gamma}$ is trim. 
	
	Now let $u,v\in C_{\gamma}$ with $u\leq_{\gamma}v$.  The interval $[u,v]_{\gamma}$ is clearly a subinterval of $[\varepsilon,v]_{\gamma}$.  The first part of this proof shows that $[\varepsilon,v]_{\gamma}$ is trim, and Theorem~\ref{thm:trim_intervals} implies the same for $[u,v]_{\gamma}$.  
	
	The distributivity of graded closed intervals in $\camb$ follows now from Theorem~\ref{thm:graded_extremal_distributive}.
\end{proof}

\section*{Acknowledgements}
	\label{sec:acknowledgements}
I thank the anonymous referee for the careful reading, and for many helpful suggestions on how to improve the presentation of the paper.  I am in particular grateful for a suggestion on how to simplify the proof of Proposition~\ref{prop:cambrian_extremal}.  

\bibliography{../../literature}

\end{document}